\theoremstyle{plain}
\newtheorem{theorem}{Theorem}
\newtheorem{prop}[theorem]{Proposition}
\newtheorem{claim}[theorem]{Claim}
\newtheorem{cor}[theorem]{Corollary}
\newtheorem*{conjhk}{Conjecture $(H_n^k)$}
\newtheorem{lemma}[theorem]{Lemma}
\theoremstyle{definition}
\newtheorem{defn}[theorem]{Definition}
\newtheorem{rmk}[theorem]{Remark}
\newtheorem*{ex*}{Example}
\newcommand\sO{{\mathcal O}}
\newcommand\sF{{\mathcal F}}
\newcommand\sE{{\mathcal E}}
\newcommand{\codim}{{\rm codim}\,}
\newcommand\rz{{\mathbf{Z}}}
\newcommand\rr{{\mathbf{R}}}
\newcommand\rc{{\mathbf{C}}}
\newcommand\rl{{\mathbf{L}}}
\newcommand\rd{{\mathbf{D}}}
\DeclareMathOperator{\Pic}{Pic}
\DeclareMathOperator{\Aut}{Aut}
\DeclareMathOperator{\Alb}{Alb}
\DeclareMathOperator{\Supp}{Supp}
\DeclarePairedDelimiterX\set[1]\lbrace\rbrace{#1}
\subjclass[2010]{14F05, 14E05} 
\keywords{Invariants of derived categories of sheaves, Rouquier isomorphism, fibrations, irregular varieties, non-vanishing loci}
\title[Derived equivalence and fibrations]
{Derived equivalence and fibrations over curves and surfaces}
\author{Luigi Lombardi}
\address{Department of Mathematics  \\ University of Milan\\Via Cesare Saldini 50, 20133 Milan, Italy }
 \email{\url{luigi.lombardi@unimi.it}}
\begin{document}

\begin{abstract}
We prove that the bounded  derived category  of  coherent sheaves on a smooth projective complex variety   reconstructs  the isomorphism classes of fibrations  onto smooth projective curves of genus $g\geq 2$. 
Moreover,  in dimension at most four, we   prove  that the same category reconstructs  the isomorphism classes of fibrations  onto  
normal projective surfaces with  positive holomorphic Euler characteristic and admitting a  finite morphism to an abelian variety. Finally, we study the derived invariance of a class of fibrations with minimal base-dimension under the condition that all the Hodge numbers of type  $h^{0,p}(X)$ are derived invariant.
\end{abstract}
\maketitle
\section{Introduction}

The bounded   derived category $\rd(X) \stackrel{{\rm def}}{=} D^b \big(\mathcal{C}oh(X)\big)$ of coherent sheaves   on a smooth projective complex variety $X$ is a homological object  that encodes several geometric properties of the variety itself.  For instance, if  the (anti)canonical bundle of $X$ is ample (\emph{resp.} big), then $\rd(X)$ reconstructs  $X$ up to isomorphism (\emph{resp.} $K$-equivalence). 
We recall that two smooth projective varieties $X$ and $Y$ are $K$\emph{-equivalent} if there exists a third smooth  projective variety $U$ and two birational morphisms $X \stackrel{\,\,p}{\leftarrow} U \stackrel{q}{\rightarrow} Y$ such that  $p^*\omega_X \simeq q^*\omega_Y$.
%
A leading conjecture concerning  the role of   $\rd(X)$ in birational geometry  is the $DK$-\emph{hypothesis} 
(\emph{cf}. \cite[Conjecture 4.4]{bondal+orlov:dercat}, \cite[Conjecture 1.2]{kawamata:birationalgeometry} and \cite[Conjecture 6.24]{huybrechts:fouriermukai}). 
This  predicts that $K$-equivalent varieties are $D$-equivalent, that is there exists an equivalence  of triangulated categories $\rd(X) \simeq \rd(Y)$.
%
In this paper, we prove that some classes of fibrations of projective varieties that  are invariant under $K$-equivalence,  are also invariant under $D$-equivalence.

To begin with,  we study the behavior of fibrations onto  smooth projective curves  under $D$-equivalence.
Let $X$ be a smooth projective complex variety and let  $g\geq 1$ be an integer.
An \emph{irrational pencil of genus $g$} of $X$ is a surjective morphism   with connected fibers $f\colon X \to C$ onto a smooth projective curve of genus $g$.
We say that two irrational pencils $f_i\colon X \to C_i$ $(i=1,2)$  are \emph{isomorphic} if there exists an isomorphism  $\gamma \colon C_1\xrightarrow{\sim} C_2$ such that $f_2 = \gamma \circ f_1$.  We set
\begin{equation}\label{sets}
F_X^g  \; = \; F^{1,g}_X \; \stackrel{{\rm def}}{=} \; \set[\big]{ \, \mbox{isomorphism classes of irrational pencils of genus } \, g \, \mbox{ of } X \,}.
\end{equation}
Our first result regards the derived invariance of irrational pencils onto smooth projective curves $C$
 of genus $g\geq 2$, or equivalently satisfying
 $\chi(\omega_C)>0$.
\begin{theorem}\label{mainthm}
Let $X$ and $Y$ be smooth projective complex varieties. If  $\Phi \colon \rd(X) \to \rd(Y)$ is an equivalence of triangulated categories,  then  for any integer $g\geq 2$ the equivalence   $\Phi$ induces  a bijection of sets $\mu_g\colon F_X^g \to F_Y^g$  preserving the bases of the fibrations. More specifically, if $\mu_g(f\colon X\rightarrow C)=(h\colon Y\rightarrow D)$, then the curves $C$ and $D$ are isomorphic.
\end{theorem}
  
The proof of Theorem \ref{mainthm} employs the results of \cite{lombardi+popa:nonvanishing} concerning the behavior of the \emph{non-vanishing loci}
\begin{equation}\label{introvi}
V^{i}(\omega_X)_0 \; \stackrel{{\rm def}}{=} \; \{ \, \alpha \in \Pic^0(X) \;  | \; H^{i}(X,\omega_X\otimes \alpha) \;   \neq \; 0 \, \}_0 , \quad \quad i=0,\ldots ,n\stackrel{{\rm def}}{=}\dim X
\end{equation}
attached to the canonical bundle 
under $D$-equivalence (the subscript $0$ denotes the  union of the irreducible components passing through the origin of an algebraic subset; \emph{cf}. \cite{green+lazar:deformations} and \cite{green+lazar:obstructions}).
Less informally, the definition of $\mu_g$   relies on the following two  ingredients:  the derived  invariance  of   $V^{n-1}(\omega_X)_0$, and the existence of a bijection between  $F_X^g$ and the set of  $g$-dimensional irreducible components of $V^{n-1}(\omega_X)_0$  (\emph{cf}.  Theorems \ref{lopo} and \ref{fibrAlbprim}). On the other hand, the isomorphism between the base curves of the fibrations is constructed by manipulating the support of  the kernel of the equivalence $\Phi$  in the style of Kawamata \cite{kawamata:dequivalence}. Earlier attempts at proving Theorem \ref{mainthm} appear in \cite[Remark 7.4]{lombardi:invariants} and \cite[Theorems 6(i) and 14]{lombardi+popa:nonvanishing}. Moreover, Theorem \ref{mainthm} answers affirmatively  a question posed in \cite[Question 13]{lombardi+popa:nonvanishing} (\emph{cf}. also \cite[Corollary 3.4]{popa:nonvanishingloci}), and carries applications towards the behavior of the properties of 
the fundamental group under derived equivalence (\emph{cf}. \S\ref{secapplications}). 

%

In the second part of this paper we study the derived invariance of a class of fibrations  
over normal projective surfaces,  which extends 
the case of irrational pencils of genus $g\geq 2$. 
 A $\chi$\emph{-positive $2$-higher irrational pencil}  is a 
 surjective morphism  with connected fibers $f\colon X\to S$   such that: $(i)$ $S$ is a normal projective surface, 
$(ii)$ $\chi(\omega_{\widetilde S})>0$  for some  resolution of singularities $\widetilde S \to S$, and $(iii)$ there exists a   morphism $S\to \Alb(\widetilde S)$ finite onto its image such that the composition $\widetilde S \to S \to \Alb(\widetilde S)$ equals the Albanese map of $\widetilde S$.
Two such $\chi$-positive $2$-higher irrational pencils are \emph{isomorphic} if there exists an isomorphism between the bases of the fibrations that  commutes with the structure morphisms (\emph{cf}. \S\ref{section:fibrations}). 
We consider the following sets of fibrations attached to $X$:
\begin{equation*}
F_X^{2,q} \stackrel{{\rm  def}}{=}  \{ \, \mbox{isomorphism classes of }\mbox{}\chi\mbox{-positive }\mbox{2-higher irrational pencils }{\scriptstyle f\colon X \rightarrow S}   \mbox{ such that  } {\scriptstyle h^0(\widetilde S,\Omega^1_{\widetilde S})=q}  \, \}.
\end{equation*}
\normalsize
It turns out that the behavior of $F_X^{2,q}$  under derived equivalence  is related to the conjectured derived invariance of the Hodge number $h^{0,2}(X)=\dim H^2(X,\sO_X)$.  In general, all  Hodge numbers are expected to be invariant under  $D$-equivalence. We refer to  \cite{huybrechts:fouriermukai}, \cite{popa+schnell:invariance} and  \cite{abuaf:units} for partial results in this direction (\emph{cf}. also Remark \ref{hodge} for a preview).

\begin{conjhk}\label{conjhodge}
Let $n\geq 1$ and $k\geq 0$ be integers such that 
$k \leq n$.  Then we have $h^{0,k}(Z_1)= h^{0,k}(Z_2)$ for all pairs of  smooth projective $D$-equivalent complex varieties $Z_1$ and $Z_2$    of dimension $n$.
\end{conjhk}
%

\begin{theorem}\label{mainthm2}
Suppose that   Conjecture $(H_n^2)$ holds for some   $n\geq 2$, and let $X$ and $Y$ be   smooth projective complex varieties of dimension $n$.
If $\Phi \colon \rd(X)\to \rd(Y)$ is an equivalence of triangulated categories, then for any   $q\geq 2$  the equivalence $\Phi$ induces a bijection of sets  $\nu_q\colon F_X^{2,q}\to F_Y^{2,q}$
 preserving the bases of the fibrations. 
\end{theorem}

A key tool  in the proof of Theorem \ref{mainthm2} is the derived invariance of  the non-vanishing locus $V^{n-2}(\omega_X)_0$. By the results of \cite{lombardi+popa:nonvanishing}, this invariance is guaranteed as soon as the Hodge number $h^{0,2}(X)$ is invariant.
Apart from this fact,  the proof of Theorem \ref{mainthm2} follows
 the general strategy of  Theorem \ref{mainthm}.  
It is important to note that the  result  \cite[Theorem 1.3]{abuaf:units} by Abuaf  proves Conjecture $(H^k_n)$ for all $0\leq k \leq n\leq 4$; therefore   Theorem \ref{mainthm2} holds unconditionally in dimension at most four. 

Finally, we extend the notion of $\chi$-positive $2$-higher irrational pencil to  fibrations with  base varieties of arbitrary dimension (\emph{cf}. \S\ref{section:fibrations}). Let $n=\dim X$ and define the non-negative integer
$$b_{\chi>0}(X) \; \in \; \{ 0, \ldots , n-1\}$$
as the minimal dimension of a base variety of a  $\chi$-positive $k$-higher irrational pencil of $X$ with $k\in \{1,\ldots , n-1\}$. We declare $b_{\chi>0}(X)=0$ if and only if $X$ does not carry any $\chi$-positive $k$-higher irrational pencil  for all  $k\in \{1,\ldots , n-1\}$.
The following result studies the derived invariance of $b_{\chi>0}(X)$. 

\begin{theorem}\label{introb}
Assume that Conjecture $(H_n^k)$ holds for some   $n\geq 2$ and  all integers $k\in \{1,\ldots ,n\}$. 
Let $X$ and $Y$ be smooth projective complex varieties of dimension $n$.  If $\Phi \colon \rd(X) \to \rd(Y)$ is an equivalence of triangulated categories,
then we have  $b_{\chi>0}(X) = b_{\chi>0}(Y)$.
Moreover, if $b\stackrel{{\rm def}}{=}b_{\chi>0}(X) >0$, then  for   any integer $q\geq b$ the equivalence $\Phi$ induces a bijection of sets  $\sigma_q\colon F_X^{b ,q}\to F_Y^{ b ,q}$
 preserving the bases of the fibrations. 
\end{theorem} 

As in a higher dimension the correspondence between  $\chi$-positive higher irrational pencils and irreducible components of non-vanishing loci is not in general one-to-one, the techniques of this paper are not enough to establish the $D$-invariance of all the sets  $F_X^{k,q}$  even if Conjecture $(H_n^k)$ would hold for all $k\geq 0$.
Along these lines, Caucci and Pareschi have interesting results concerning the derived invariance of fibrations of varieties of maximal Albanese dimension (\emph{cf}.  \cite[\S 4]{caucci+pareschi:invariants}).
Finally we refer to \S\ref{secfurther} for partial results  about the    invariance of fibrations over threefolds of $D$-equivalent fourfolds.
 

\section{Background material}\label{secpreview}
 
 \subsection{Notation} Throughout the paper we work over the field of complex numbers.
 By the term \emph{fibration} we mean a morphism of varieties that is surjective and with connected fibers. We denote by $q(X)=h^1(X,\sO_X) =  \dim \Pic^0(X)$ the \emph{irregularity} of a smooth projective variety $X$, and denote  by $alb_X\colon X\to \Alb(X)$ the Albanese map. 
  The morphism $alb_X$ is only defined  up to the choice of a point in $X$ and any two Albanese maps  differ by a translation on $\Alb(X)$.
We say that $X$ is of \emph{maximal Albanese dimension} if $alb_X$ is generically finite onto its image. 
The irregularity of a normal variety  $V$  is defined as $q(V)\stackrel{{\rm def}}{=}q(\widetilde V)$ where $\widetilde V$ is any resolution of singularities.  
We say that  a fibration  $\widetilde f\colon \widetilde X\to \widetilde V$ is a \emph{non-singular representative} of a fibration $f\colon X\to V$ if there exist resolutions of singularities 
 $\pi \colon \widetilde X  \to X$ and $\rho\colon \widetilde V \to V$  such that $f\circ \pi = \rho \circ \widetilde f$ (\emph{cf}. \cite[(1.10)]{mori:classification}).
 
 \subsection{The Rouquier isomorphism}\label{secrouquier}
 Let $X$ and $Y$ be smooth projective varieties of dimension $n$. 
We denote by $\rd(X) = D^b \big(\mathcal{C}oh(X) \big)$ and $\rd(Y) = D^b \big(\mathcal{C}oh(Y) \big)$ the bounded derived categories of coherent sheaves on $X$ and $Y$, respectively. We say that  $X$ and $Y$ are $D$\emph{-equivalent} if there exists an equivalence of triangulated categories $\Phi \colon \rd(X) \stackrel{\sim}{\longrightarrow} \rd(Y)$.
By \cite[Theorem 2.2]{orlov:representability}, this equivalence is of  Fourier--Mukai type $\Phi(-) \simeq \Phi_{\sE}(-) \stackrel{{\rm def}}{=}\mathbf{R} p_{2*}\big(p_1^*(-)\stackrel{\mathbf{L}}{\otimes } \sE \big)$. Here  the morphisms
$p_1$ and $p_2$ are the  projections from $X\times Y$ onto the first and second factor, respectively, and 
$\sE$ is an object in  $\mathbf{D}(X\times Y) \stackrel{\rm def}{=}D^b\big(\mathcal{C}oh(X\times Y) \big)$,
 uniquely determined by $\Phi$ up to isomorphism.  
Any  equivalence $\Phi_{\sE}$  induces an isomorphism of algebraic groups
 $$F_{\sE} \; \colon  \; \Aut^0(X) \, \times \, \Pic^0(X) \; \stackrel{\sim}{\longrightarrow} \; \Aut^0(Y) \, \times \, \Pic^0(Y)$$ 
 called  \emph{Rouquier's isomorphism} (\emph{cf}. \cite[Th\'{e}or\`{e}me 4.18]{rouquier:automorphisms} and \cite[Equation (3.1)]{popa+schnell:invariance}). We refer to equation \eqref{rouquieraction} for the   action of $F_{\sE}$. 
%
%
%

Let 
$$ V^k(\omega_X) \; \stackrel{{\rm def}}{=} \; \{\, \alpha \in \Pic^0(X) \,| \, H^k(X , \omega_X \otimes \alpha) \neq 0 \,\}$$
be the $k$-th non-vanishing locus attached to the canonical bundle.
The following theorem describes the images of the  loci $V^{k}(\omega_X)_0$ under $F_{\sE}$ (\emph{cf}. \eqref{introvi} for the definition of $V^{k}(\omega_X)_0$).
  We refer to \cite[Theorem 12]{lombardi+popa:nonvanishing} for a more general result.
 \begin{theorem}[Lombardi--Popa]\label{lopo}
 Suppose  that Conjecture $(H_n^k)$ holds for some integers $n\geq 1$ and $k \in \{0, \ldots , n\}$. If $X$ and $Y$ are smooth projective   varieties of dimension $n$ and  $\Phi_{\sE} \colon \rd(X)\stackrel{\simeq}{\longrightarrow} \rd(Y)$ is an equivalence, then we have
\begin{equation}\label{rouqv} 
F_{\sE} \big({\rm id}_X,V^{n-k}(\omega_X)_0 \big) \; = \; \big({\rm id}_Y, V^{n-k}(\omega_Y)_0 \big). 
\end{equation}
  In particular,  $F_{\sE}$ induces an isomorphism of    algebraic sets 
 $V^{n-k}(\omega_X)_0 \simeq V^{n-k}(\omega_Y)_0$.
\end{theorem}

\begin{rmk}\label{hodge}
Conjecture $(H_n^k)$  holds for all integers $n\geq 1$ and $k \in \{0,1,n-1,n\}$ (\emph{cf}. \cite[Remark 5.40]{huybrechts:fouriermukai}, \cite{popa+schnell:invariance} and  \cite{lombardi+popa:nonvanishing}). Moreover, it holds for all $n\leq 4$ and  $k\in \{0, \ldots , n \}$ (\emph{cf}. \cite[Theorem 1.3]{abuaf:units}).
\end{rmk}
 
The above Theorem \ref{lopo}  plays an important role in the  construction of the bijections $\mu_g$ of Theorem \ref{mainthm}. This goes as follows.  By  Green--Lazarsfeld's  Theorem \cite[Theorem 0.1]{green+lazar:obstructions}, the positive-dimensional irreducible components of $V^{n-1}(\omega_X)_0 $ are abelian varieties that give rise to irrational pencils $f\colon X\to C$ of genus $g\geq 1$ (see Lemma \ref{5.1} and Remark \ref{rmkalb}). Moreover, 
in \cite[Corollaire 2.3]{beauville:h1},  Beauville proves that the irrational pencils $f$ must be  of genus  $g\geq 2$,  and hence that 
$$V^{n-1}(\omega_X)_0 \; = \; \{\sO_X\}  \, \cup  \, \bigcup_{g\geq 2}\;\; \bigcup_{(f\colon X \to C) \in  F^g_X}  \big(f^*\Pic^0(C)\big)$$ if $q(X)>0$
(see \eqref{sets} for the definition of $F_X^g$). In  Theorem \ref{fibrAlbprim} 
we will observe that for any integer $g\geq 2$
 there  are one-to-one correspondences  of sets 
$$u_{X,1,g}\,\colon\, F_X^{g}\, \to \, \pi_0^g\big( V^{n-1}(\omega_X)_0 \big), \quad \big(f\colon X\to C \big)\, \mapsto \, f^*\Pic^0(C)$$ 
between $F_X^g$  and the set  of $g$-dimensional irreducible components of $V^{n-1}(\omega_X)_0$.
In view of Theorem \ref{lopo} and Remark \ref{hodge}, we define the bijections $\mu_g\colon F_X^g\to F_Y^g$  as  $\mu_g\,  = \,  u_{Y,1,g}^{-1} \, \circ \, F_{\sE} \, \circ \, u_{X,1,g}$.

\section{Non-vanishing loci and fibrations}\label{section:fibrations}
We denote by  $X$   a smooth projective   variety of dimension $n$. 
\subsection{Non-vanishing loci}
The \emph{non-vanishing loci} attached to a coherent sheaf $\sF$ on $X$ are the  algebraic closed subsets of $\Pic^0(X)$ defined by
$$V^i(\sF) \; \stackrel{{\rm def}}{=} \; \{ \, \alpha \, \in \, \Pic^0(X) \; | \; H^i(X,\, \sF\otimes \alpha) \, \neq  \, 0 \, \} \quad (i\geq 0). $$ 
We denote by $V^i(\sF)_0$ for the union of all the components of $V^i(\sF)$ passing through the origin.

Of particular interest is the case of the canonical bundle $\sF = \omega_X$.
Fundamental theorems of Green--Lazarsfeld and Simpson (\emph{cf}. \cite[Theorem 0.1]{green+lazar:obstructions} and \cite{simpson:subspaces}) prove that 
every irreducible component of $V^i(\omega_X)$  is a translate of an abelian subvariety of $\Pic^0(X)$
by a point of finite order. Furthermore,  in \cite[Theorem 1]{green+lazar:deformations}, the authors prove 
that 
if $X$ is of maximal Albanese dimension, then   the \emph{generic vanishing condition}  
\begin{equation}\label{gvcondition}
\codim V^i(\omega_X) \geq i \quad \mbox{ for all } \; i>0
\end{equation}
 holds.

\begin{lemma}\label{lemchi}
Suppose that $X$ is of maximal Albanese dimension. Then we have  $\chi(\omega_X)  \geq 0$. Moreover, we have 
  $\chi(\omega_X)>0$ if and only if $V^0(\omega_X) = \Pic^0(X)$.
\end{lemma}  
\begin{proof}
By the inequalities in  \eqref{gvcondition} we have that $V^i(\omega_X) \neq \Pic^0(X)$ for all $i>0$.
Hence, by the deformation invariance of the holomorphic   Euler characteristic, one deduces that 
$$\chi(\omega_X) \, = \,  \chi(\omega_X \otimes \alpha) \, = \, h^0(X,\omega_X \otimes \alpha) \, \geq \, 0 \quad \mbox{ for any } \quad \alpha \in \Pic^0(X) \backslash \Big( \bigcup_{i>0} V^i(\omega_X)\Big).$$
\end{proof}
%

%
\begin{prop}\label{picbir}
Let $U$ be a normal projective variety, and let $\vartheta \colon \widetilde U \to U$ be a resolution of singularities. If there exists a morphism $a\colon U\to \Alb(\widetilde U)$ such that 
the composition $\widetilde U \stackrel{\vartheta }{\to} U \stackrel{a}{\to} \Alb(\widetilde U)$ equals  the Albanese map $alb_{\widetilde U}$, then the morphisms  $\vartheta ^*$ and $\vartheta _*$ induce an isomorphism of algebraic groups $\Pic^0(\widetilde U) \simeq \Pic^0(U)$. 
\end{prop}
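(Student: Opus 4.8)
The plan is to factor the pullback $\rho^*$ through the Albanese map and to exploit the classical fact that the Albanese induces an isomorphism on $\Pic^0$. Since $\widetilde U$ is smooth projective, $\Alb(\widetilde U)$ is dual to $\Pic^0(\widetilde U)$, and $\Pic^0$ of an abelian variety recovers its dual; consequently pullback along the Albanese map is an isomorphism of algebraic groups $alb_{\widetilde U}^*\colon \Pic^0(\Alb(\widetilde U))\xrightarrow{\ \simeq\ }\Pic^0(\widetilde U)$. This is the one genuinely external input on the ``smooth'' side, and it is the engine of the argument.

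Next I would translate the hypothesis $alb_{\widetilde U}=a\circ\rho$ into the factorization of pullback homomorphisms $alb_{\widetilde U}^*=\rho^*\circ a^*$, where $a^*\colon \Pic^0(\Alb(\widetilde U))\to \Pic^0(U)$. Setting $s \stackrel{{\rm def}}{=} a^*\circ (alb_{\widetilde U}^*)^{-1}\colon \Pic^0(\widetilde U)\to \Pic^0(U)$, which is manifestly a homomorphism of algebraic groups, one computes $\rho^*\circ s = \rho^*\circ a^*\circ (alb_{\widetilde U}^*)^{-1}=alb_{\widetilde U}^*\circ (alb_{\widetilde U}^*)^{-1}=\mathrm{id}$. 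Thus $\rho^*$ admits a right inverse and is in particular surjective; it remains only to establish injectivity.

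For injectivity I would use that $\rho$, being a resolution, is a proper birational morphism onto the \emph{normal} variety $U$, so that $\rho_*\sO_{\widetilde U}=\sO_U$. If $L\in \Pic^0(U)$ satisfies $\rho^*L\simeq \sO_{\widetilde U}$, then the projection formula gives $L\simeq L\otimes \rho_*\sO_{\widetilde U}\simeq \rho_*\rho^* L\simeq \rho_*\sO_{\widetilde U}\simeq \sO_U$, so $\ker \rho^*$ is trivial. Combining this with $\rho^*\circ s=\mathrm{id}$ forces $s\circ \rho^*=\mathrm{id}$ as well: both sides are morphisms of reduced group schemes over $\rc$ that agree on $\rc$-points, since $\rho^*\circ(s\circ\rho^*)=\rho^*$ and $\rho^*$ is injective. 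Hence $\rho^*$ and $s$ are mutually inverse isomorphisms of algebraic groups.

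The argument is essentially formal once the two classical facts are in place, so I expect no serious obstacle, only two points deserving care. The first is keeping track that normality is used precisely (and only) through $\rho_*\sO_{\widetilde U}=\sO_U$ in the injectivity step. The second is upgrading the bijectivity of $\rho^*$ from a statement about line bundles to an isomorphism of \emph{algebraic groups}; this is automatic in characteristic $0$, where the relevant $\Pic^0$'s are reduced, and is made transparent by exhibiting the explicit inverse $s$ rather than merely invoking a bijective-homomorphism argument.
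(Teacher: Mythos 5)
Your proof is correct and follows essentially the same route as the paper's: surjectivity of $\rho^*$ comes from the factorization $alb_{\widetilde U}=a\circ\rho$ together with the fact that every element of $\Pic^0(\widetilde U)$ pulls back from $\Alb(\widetilde U)$, and injectivity comes from the projection formula using $\rho_*\sO_{\widetilde U}\simeq \sO_U$ (normality of $U$). Your explicit right inverse $s=a^*\circ(alb_{\widetilde U}^*)^{-1}$ is just a slightly more formal packaging of the paper's surjectivity step.
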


\begin{proof}
  The morphism $\vartheta ^*\colon \Pic^0(U)\to \Pic^0(\widetilde U)$ is surjective as any line bundle $\alpha \in \Pic^0(\widetilde U)$ is a  pullback of a topologically trivial line bundle on $\Alb(\widetilde U)$. Suppose now that 
 $\vartheta ^*(\alpha_1\otimes \alpha_2^{-1}) \simeq \sO_{\widetilde U}\simeq \vartheta ^*\sO_{U}$ where $\alpha_1$ and $\alpha_2$ are   in $\Pic^0(U)$. As  $\vartheta _*\sO_{\widetilde U}\simeq \sO_U$, the projection formula yields $\alpha_1\otimes \alpha_2^{-1}\simeq \sO_U$.
  \end{proof}

\begin{rmk}\label{invvi}
If $\vartheta \colon \widetilde U\to U$ is a birational morphism between    smooth projective varieties, then  $\vartheta^* \colon \Pic^0(U) \to \Pic^0(\widetilde U)$ induces isomorphisms
 $V^i(\omega_{U}) \simeq  V^i(\omega_{\widetilde U})$ for all $i\geq 0$.
\end{rmk}

The following proposition will be useful in the sequel.
\begin{prop}\label{extension}
Let $U$ be a proper  variety and  let $a\colon U \to A$ be a    morphism to an abelian variety, finite onto its image. 
Then any   rational map $Y \dashrightarrow U$ from a smooth projective variety $Y$ extends to a morphism.
\end{prop}
\begin{proof}
 If $Y\dashrightarrow  U$ is  not a morphism, then by \cite[Corollary 1.5]{KM} the variety $U$   contains a rational curve $E$. Therefore $a(E)$ is  a rational curve in $A$, which is a contradiction.
\end{proof}

 \subsection{Fibrations}
In this subsection we establish the existence of the bijection $u_{X,1,g}$ mentioned in \S\ref{secpreview} together with its extension to  fibrations over higher-dimensional bases.  We begin by defining the following class of varieties (\emph{resp}. fibrations) which represents a  higher-dimensional analog of that of smooth projective  curves  (\emph{resp.}   irrational pencils) of genus  $g\geq 2$.

\begin{defn}\label{defchipos}
A normal projective variety $V$ is   $\chi$\emph{-positive} if there exists a resolution of singularities $\rho \colon \widetilde V \to V$  and a  morphism
$b\colon V \to \Alb(\widetilde V)$ finite onto its image such that  the following two conditions hold: $(i)$ the composition 
$\widetilde V \stackrel{\rho}{\longrightarrow} V\stackrel{b}{\longrightarrow} \Alb(\widetilde V)$ equals the Albanese map $alb_{\widetilde V}$, and $(ii)$  $\chi(\omega_{\widetilde V})>0$.
\end{defn}
\begin{defn}
Let $0< k< n$ be an integer.
A  $\chi$-\emph{positive} $k$-\emph{higher irrational pencil} of $X$ is a fibration $f\colon X \to V$ onto a $\chi$-positive variety of dimension $k$.
Two such fibrations  $f_1 \colon X \to V_1$  and $f_2 \colon X \to V_2$ are \emph{isomorphic} if there exists an isomorphism $\varphi\colon V_1\stackrel{ \sim}{\to} V_2$ 
such that $f_2=\varphi \circ f_1$. 
\end{defn} 

\begin{rmk}\label{rmkbirational}
Let $\widetilde V_i \stackrel{\rho_i}{\longrightarrow} V_i\stackrel{b_i}{\longrightarrow} \Alb(\widetilde V_i)$ ($i=1,2$) be varieties and morphisms as in Definition \ref{defchipos}.
Any birational map $\psi \colon V_1 \dashrightarrow V_2$ induces
a birational map $\widetilde V_1 \dashrightarrow \widetilde V_2$, and thus
 an isomorphism $\psi_*\colon \Alb(\widetilde V_1) \to \Alb(\widetilde V_2)$ such that $b_2 \circ \psi = \psi_* \circ b_1$
 (\cite[Lemma 2.6]{Ueno}). As $b_1$ and $b_2$ are finite morphisms, the map $\psi$ extends to an isomorphism. We conclude that in the definition of an  isomorphism of  $\chi$-positive higher irrational pencils we may just require  $\varphi$ to be a birational map. 
\end{rmk}
 
\begin{rmk}\label{rmkresolution}
Let $\widetilde V \stackrel{\rho}{\to} V\stackrel{b}{\to} \Alb(\widetilde V)$ be varieties and morphisms as in Definition \ref{defchipos}.
If $V' \to V$ is   another resolution of singularities, then $b$ induces a morphism $b'\colon V \to \Alb(V')$ finite onto its image such that  the composition $V' \to V \stackrel{b'}{\to} \Alb(V')$ equals the Albanese map of $V'$. This fact is again an application of \cite[Lemma 2.6]{Ueno}. 
Moreover, once  a resolution $\widetilde V$ of $V$ has been fixed, a finite map $b\colon V \to \Alb(\widetilde V)$ as in Definition \ref{defchipos} is uniquely determined up to 
a translation on $\Alb (\widetilde V)$.
\end{rmk}

 \begin{rmk}
 A $\chi$-positive variety $V$ is necessarily of general type (\emph{cf}. \cite[Theorem 1]{chen+hacon:pluricanonicalmaps}). Moreover, if $V$ is non-singular, then $V$ does not admit any non-trivial Fourier--Mukai partner.
 \end{rmk}
 
 \begin{rmk}
 In \cite{catanese:moduli}, the author considers fibrations  over normal projective varieties of maximal Albanese dimension and with non-surjective Albanese map. We notice that our classes of  $\chi$-positive higher irrational pencils are not  sub-classes of Catanese's fibrations. In fact, there are smooth projective varieties with  positive holomorphic Euler characteristic   such that their Albanese maps are finite and surjective (\emph{cf}. \cite[Example 7.2]{barja+pardini+stoppino:linearsystems}).
 \end{rmk}
 For any pairs  of integers  $q\geq k\geq 1$ we define the following sets of fibrations:
$$\small F_X^{k,q} \, \stackrel{{\rm def}}{=}  \, \{\,  \mbox{isomorphism classes of }\chi\mbox{-positive }k\mbox{-higher irrational pencils }f\colon X \rightarrow V
  \mbox{ such that  } q(V)=q  \,\}.$$
Note that, for any integer $g\geq 2$, the   set $F_X^{1,g}$ coincides with the set $F_X^g$ of the Introduction.
The main result of this section is Theorem \ref{fibrAlbprim} which builds upon the results of \cite{green+lazar:obstructions} and \cite{pareschi:standard}. 
To begin with, we recall the result \cite[Lemma 5.1]{pareschi:standard}.
 
\begin{lemma}[Green--Lazarsfeld, Pareschi]\label{5.1}
Let $0<i<n$ be an integer and let 
 $Z\subset V^i(\omega_X)_0$ be an irreducible component of positive dimension. Then $Z$ induces a fibration $p\colon X\to V$ onto a normal projective variety $V$ of dimension $0<\dim V\leq n-i$ (see Remark \ref{rmkalb} for the definition of $p$). Moreover,  if $(\pi \colon \widetilde X \to X, \; \rho \colon \widetilde V \to V, \; \widetilde p\colon \widetilde X\to \widetilde V)$  is a non-singular representative of $p$, then
  $V$ admits a   morphism 	 $b\colon V \to \Alb(\widetilde V)$ finite onto its image such that $b \circ \rho = alb_{\widetilde V}$.  Moreover, we have 
  $$\chi(R^i\widetilde p_*\omega_{\widetilde X})>0, \quad V^0(R^i\widetilde p_*\omega_{\widetilde X}) = \Pic^0(\widetilde V), \quad \mbox{ and }  \quad 
 Z \, = \, \pi_* \widetilde p^*V^0(R^i\widetilde p_*\omega_{\widetilde X}) \, = \, \pi_* \widetilde p^*\Pic^0(\widetilde V).$$
\end{lemma}

\begin{rmk}\label{rmkalb}
The fibration $p\colon X\to V$ of Lemma \ref{5.1} is defined as  the Stein factorization of the composition $(q \, \circ \, alb_X)\colon X \to \widehat Z$ where
 $q \colon\Alb(X)\to \widehat Z$ is the dual map of the inclusion $Z\subset \Pic^0(X)$. In particular, $V$ admits a  morphism $b\colon V \to \widehat{Z}$  which is finite onto its image. In addition, if $\widetilde V \to V$ is any resolution of singularities, then  $\widehat{Z} \simeq \Alb(\widetilde V)$ and   the composition $\widetilde V \to V \stackrel{b}{\to} \Alb(\widetilde V)$ equals $alb_{\widetilde V}$ (\emph{cf}. \cite[Lemma 5.1]{pareschi:standard}). It follows that  $q(V) = \dim Z$. 
\end{rmk}

\begin{rmk}
As the Albanese map is defined up to  the choice of a point, the fibration $p\colon X \to V$ of Lemma \ref{5.1} is only determined  up to 
isomorphism of fibrations.  
\end{rmk}

\begin{cor}\label{corkollar}
Assume that  the hypotheses of Lemma \ref{5.1} hold. If $\dim V = n-i$, then we have  $\chi(\omega_{\widetilde V}) >0$ and the fibration
 $f\colon X \to V$ determines  a class  in $F_X^{n-i,q(V)}$.
  \end{cor}

\begin{proof}
By \cite[Proposition 7.6]{kollar:higher1} there is an isomorphism  $R^i \widetilde f_*\omega_{\widetilde X}\simeq \omega_{\widetilde V}$. It follows that $\chi( \omega_{\widetilde V}) = \chi(R^i \widetilde f_*\omega_{\widetilde X}) >0$. 
\end{proof}

\begin{lemma}\label{gamma2}
Let $f_i\colon X \to V_i$ $(i=1,2)$ be two fibrations onto normal projective varieties such that $\dim V_2\leq \dim V_1$. Let $(\pi_i\colon \widetilde X_i \to X, \, \rho_i \colon \widetilde V_i \to V, \, \widetilde f_i \colon \widetilde X_i \to \widetilde V_i )$ be non-singular representatives of $f_i$ for $i=1,2$, and let $b_i\colon V_i \to \Alb( \widetilde V_i)$ be morphisms finite onto their images such that $b_i \circ \rho_i = alb_{\widetilde V_i}$. If $\pi_{1*} \widetilde f^*_1 \Pic^0(\widetilde V_1) \subset  \pi_{2*} \widetilde f^*_2 \Pic^0(\widetilde V_2)$ in $\Pic^0(X)$, then there exists an isomorphism $\gamma \colon V_2 \stackrel{\sim}{\to} V_1$ such that $f_1 = \gamma \circ f_2$.
\end{lemma}

\begin{proof}
We consider the following commutative diagram
$$
\centerline{ \xymatrix@=32pt{
 X \ar[r]^{f_2}\ar@/^2pc/[rr]^{f_1} \ar[d]^{alb_X} & V_2\ar[d]^{b_2}\ar@{.>}[r]^{\exists\, \,\gamma} & V_1\ar[d]^{b_1} \\
 {\rm Alb}(X) \ar[r]^{\widetilde f_{2*}} \ar@/_2pc/[rr]^{\widetilde f_{1*}} & {\rm Alb}(\widetilde V_2)\ar[r]^{i_*} & {\rm Alb}(\widetilde V_1), \\}} 
 \noindent
  $$   
where the morphisms $\widetilde f_{i*}$ ($i=1,2$) are induced by the inclusions $\pi_{i*} \widetilde f^*_i \Pic^0(\widetilde V_i) \subset \Pic^0(X)$, or, equivalently, by  the morphisms  $\widetilde{f_i}$.
 Moreover, the morphism $i_*$ is induced by the inclusion $\pi_{1*}\widetilde f_1^*\Pic^0(\widetilde V_1)\subset \pi_{2*} \widetilde f_2^*\Pic^0(\widetilde V_2)$. Also note   both the inner and outer squares of the above  diagram commute.
As the general fiber of $f_2$ is contracted by $f_1$ (recall that $b_1$ is finite onto its image), by \cite[Lemma 1.15]{debarre:higher} there exists a dominant rational map $\gamma\colon V_2\dashrightarrow V_1$ such that $f_1=\gamma\circ f_2$. 
Hence $q(\widetilde V_1) = q(\widetilde V_2)$ and $i_*$ is an isomorphism.
Moreover  $\gamma$ is  birational as $\dim V_1=\dim V_2$ and  $f_1, f_2$ have connected fibers. 
Finally,   $b_1\circ \gamma = i_* \circ  b_2$, so that  $\gamma$ extends to an  isomorphism as $b_1, b_2$ have finite fibers.
\end{proof} 

\begin{cor}\label{corinverse}
  Let $0<i<n$ be an integer and  let $Z \subset V^i(\omega_X)_0$ be an  irreducible component of positive dimension inducing 
  a fibration $p\colon X \to V$ as in Lemma \ref{5.1}. Let $f\colon X \to Q$ be a fibration onto a normal  projective variety  such 
   that $Z= \pi_* \widetilde f^*\Pic^0(\widetilde Q)$ for some non-singular representative $(\pi \colon \widetilde X \to X, \rho \colon \widetilde Q \to Q, \widetilde f \colon \widetilde X \to \widetilde Q)$ of $f$. Assume  there exists a  morphism   $b\colon Q \to \Alb(\widetilde Q)$ finite onto its image such that $b\circ \rho = alb_{\widetilde Q}$.
      Then there exists an isomorphism $\gamma \colon Q \stackrel{\sim}{\to}  V$ such that $p = \gamma \circ f$.
   \end{cor}
   \begin{proof}
   By Remark \ref{rmkalb} the Stein factorization of the composition $X \to \Alb(X) \to \widehat Z$ factors through a finite morphism $b'\colon V \to \widehat Z$.
We apply Lemma \ref{gamma2}.
   \end{proof}
   
\begin{cor}\label{corfibration}
Let $f\colon X \to V$ be a fibration onto a normal projective variety of dimension $k$, and let $(\pi \colon \widetilde X \to X, \, \rho \colon \widetilde V \to V, \, \widetilde f \colon \widetilde X \to \widetilde V)$ be a non-singular representative of $f$. Assume   there exists a  morphism  $b\colon V \to \Alb(\widetilde V)$ finite onto its image such that $b\circ \rho = alb_{\widetilde V}$. If $\chi(\omega_{\widetilde V} ) > 0$, then $\pi_*\widetilde f^*\Pic^0(\widetilde V)$  is an irreducible component  of $V^{n-k}(\omega_X)_0$ of dimension $q(V)$. Moreover  $\pi_*\widetilde f^*\Pic^0(\widetilde V)$  does not depend on the particular  choice of the non-singular representative $(\pi, \rho, \widetilde f)$. Finally, the fibration induced by the component $\pi_*\widetilde f^*\Pic^0(\widetilde V)$   by  Lemma \ref{5.1} is precisely the fibration  $f\colon X \to V$ up to isomorphism. 
\end{cor}   
   
   \begin{proof}
By  Lemma \ref{lemchi} we have $V^0(\omega_{\widetilde V})=\Pic^0(\widetilde V)$. Moreover, by \cite[Lemma 6.3]{lombardi:invariants} and Remark \ref{invvi}, the variety $Z\stackrel{{\rm def}}{=} \pi_* \widetilde f^*\Pic^0(\widetilde V)$ is contained  in $V^{n-k}(\omega_ X)_0$. We now show    $Z$ is an irreducible component of $V^{n-k}(\omega_X)_0$. We argue by contradiction and assume   $Z$ is strictly contained in an irreducible component $Z'$ of $V^{n-k}(\omega_{X})_0$. By Lemma \ref{5.1},  $Z'$ gives rise to both a fibration $X\to U$ such that $0<\dim U\leq k$ as well as  a morphism $U \to \Alb(\widetilde U)$ finite onto its image satisfying the properties listed  in Lemma \ref{5.1} (here $\widetilde U$ is a resolution of singularities of $U$). Because of Lemma \ref{gamma2} this is impossible. The other assertions of the corollary  follow  by    Lemma \ref{gamma2}.
%
   \end{proof}

In the following results we denote by $(\pi \colon \widetilde X \to X, \, \widetilde f \colon \widetilde{X}\to \widetilde{V})$ 
a non-singular representative of a $\chi$-positive higher irrational pencil $f\colon X\to V$, and by  $\pi_0^q\big(V^{n-k}(\omega_X)_0\big)$ the set of     
$q$-dimensional irreducible components of $V^{n-k}(\omega_X)_0$.

\begin{prop}\label{prop:bijinj}
Let $q\geq k\geq 1$ be integers. 
Then the assignment
$$u_{X,k,q}\colon F_X^{k,q} \, \to \, \pi_0^q\big(V^{n-k}(\omega_X)_0\big),\quad \big(f\colon X\to V \big) \, \mapsto \, \pi_* \widetilde{f}^*\Pic^0(\widetilde V)$$ 
is well-defined and injective. In particular the sets $F_{X}^{k,q}$ are finite.
\end{prop}

\begin{proof}
Let $(f\colon X\rightarrow V) \in F_X^{k,q}$ be an isomorphism class determined by a  $\chi$-positive $k$-higher irrational pencil $f\colon X \to V$ with $q(V)=q$.   By Corollary  \ref{corfibration}, the abelian variety $\pi_* \widetilde f^*\Pic^0(\widetilde V)$ is an irreducible component of  $V^{n-k}(\omega_X)_0$ of dimension $q$. This variety is independent both 
of the non-singular representative $(\pi, \widetilde f)$, and 
the choice of a representative of the class of $f$. 
Hence  the map $u_{X,k,q}$ is injective by Corollary \ref{corinverse}.
\end{proof}

\begin{prop}\label{prop:bijsurj}

Let $q\geq k\geq 1$ be integers and suppose   $\dim V^i(\omega_X)_0\leq 0$ for all $i>n-k$.
Then the assignment
$$u_{X,k,q}\colon F_X^{k,q} \, \to \, \pi_0^q\big(V^{n-k}(\omega_X)_0\big),\quad \big(f\colon X\to V \big) \, \mapsto \, \pi_* \widetilde{f}^*\Pic^0(\widetilde V)$$
of Proposition \ref{prop:bijinj} is surjective.
\end{prop}

\begin{proof}
Let  $Z\subset V^{n-k}(\omega_X)_0$ be an irreducible component of dimension $q$. We will show that $Z$ arises from an isomorphism class of a  $\chi$-positive $k$-higher irrational pencil of irregularity $q$.
By Lemma \ref{5.1}, the component   $Z$ determines a $\chi$-positive $k$-higher irrational pencil $f \colon X\to V$ up to isomorphism onto a normal projective variety $V$ such that $0<\dim V\leq k$ and $q(V)=q$. Moreover, 
we have  $Z=\pi_*\widetilde f^* \Pic^0( \widetilde V)$; also, 
 $V$ admits a finite  morphism, onto its image, to $\Alb(\widetilde V)$.
Hence,  by Corollary \ref{corkollar}, it is enough to show    $\dim V=k$.
We proceed  by contradiction and suppose   that $\dim V=k-s$ for some   integer  $s>0$. We distinguish two cases:  $\dim V^0(\omega_{\widetilde V})_0>0$ and $\dim V^0(\omega_{\widetilde V})_0=0$ (notice that in either case $\sO_{\widetilde V} \in V^0(\omega_{\widetilde V})_0$). 
If $\dim V^0(\omega_{\widetilde V})_0>0$, then by pulling topologically trivial line bundles back to $\widetilde X$,
 we   have   $ V^0(\omega_{\widetilde V})_0 \subset V^{n-k+s}(\omega_{\widetilde X})_0 \simeq V^{n-k+s}(\omega_X)_0$ (\emph{cf}. \cite[Lemma 6.3]{lombardi:invariants}) which is a contradiction. Suppose now the second case $V^0(\omega_{\widetilde V})_0=\{\sO_{\widetilde V}\}$. 
Then, by \cite[Proposition 2.2]{ein+laz:theta}, the Albanese map   $alb_{\widetilde V}\colon \widetilde V\to \Alb(\widetilde V)$ is  surjective and we   have  $q= q(V)=q(\widetilde V)\leq \dim \widetilde V=k-s<k$. This is  again a contradiction. 

\end{proof}

By putting together the previous two propositions we  obtain the following result.

\begin{theorem}\label{fibrAlbprim}
Let $q\geq k\geq 1$ be integers and suppose   $\dim V^i(\omega_X)_0\leq 0$ for all $i>n-k$.
Then the assignment
$$u_{X,k,q}\colon F_X^{k,q} \, \to \, \pi_0^q\big(V^{n-k}(\omega_X)_0\big),\quad \big(f\colon X\to V \big) \, \mapsto \, \pi_* \widetilde{f}^*\Pic^0(\widetilde V)$$ 
  is well-defined and yields a bijection of sets.
\end{theorem}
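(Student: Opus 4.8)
The plan is to verify, in order, that $u_{X,k,q}$ is well defined, that its image consists of $q$-dimensional components, and then that it is injective and surjective, the inverse being supplied by Lemma~\ref{5.1}. Independence of the chosen non-singular representative $\widetilde f\colon \widetilde X\to \widetilde V$ is immediate from Remark~\ref{invvi}, since any two representatives have birational bases and birational total spaces, so the pulled-back subtori of $\Pic^0(X)$ agree. To see $\widetilde f^*\Pic^0(\widetilde V)\subseteq V^{n-k}(\omega_X)_0$, I would use that $\widetilde f$ has relative dimension $n-k$ and invoke Koll\'ar's splitting $H^{n-k}(\widetilde X,\omega_{\widetilde X}\otimes \widetilde f^*\alpha)=\bigoplus_{a+b=n-k}H^a(\widetilde V,R^b\widetilde f_*\omega_{\widetilde X}\otimes \alpha)$. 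The summand $(a,b)=(0,n-k)$ equals $H^0(\widetilde V,\omega_{\widetilde V}\otimes \alpha)$ because $R^{n-k}\widetilde f_*\omega_{\widetilde X}\cong \omega_{\widetilde V}$; since $\widetilde V$ has maximal Albanese dimension and $\chi(\omega_{\widetilde V})>0$, the sheaf $\omega_{\widetilde V}$ satisfies $GV$ with $V^0(\omega_{\widetilde V})=\Pic^0(\widetilde V)$, so this summand is nonzero for every $\alpha$. Hence $\widetilde f^*\Pic^0(\widetilde V)$ is a $q$-dimensional abelian subvariety of $V^{n-k}(\omega_X)_0$.

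Next I would show this subvariety is an irreducible component. Let $Z$ be a component of $V^{n-k}(\omega_X)_0$ containing $\widetilde f^*\Pic^0(\widetilde V)$. As $Z$ has positive dimension, Lemma~\ref{5.1} (with $i=n-k$) yields a fibration $p\colon X\to V'$ with $\dim V'\le k$, a non-singular representative $\widetilde p\colon \widetilde X\to \widetilde V'$ (pass to a common resolution dominating those of $\widetilde f$ and $\widetilde p$), and $Z=\widetilde p^*\Pic^0(\widetilde V')$. The inclusion $\widetilde f^*\Pic^0(\widetilde V)\subseteq \widetilde p^*\Pic^0(\widetilde V')$ together with the maximal Albanese dimension of $\widetilde V$ lets me apply Lemma~\ref{gamma}, producing a dominant rational map $\gamma\colon \widetilde V'\dashrightarrow \widetilde V$ with $\widetilde f=\gamma\widetilde p$; hence $k=\dim \widetilde V\le \dim \widetilde V'\le k$, so $\dim\widetilde V'=k$ and the last clause of Lemma~\ref{gamma} gives $\widetilde f^*\Pic^0(\widetilde V)=\widetilde p^*\Pic^0(\widetilde V')=Z$. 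Running the same comparison with two pencils $f_1,f_2$ having the same image $Z$ proves injectivity: Lemma~\ref{gamma} yields a birational map between the $k$-dimensional bases, which by Remark~\ref{rmkbirational} extends to an isomorphism compatible with the structure morphisms and the finite maps to the Albanese varieties, i.e. $[f_1]=[f_2]$.

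It remains to prove surjectivity, which is the crux. Given a $q$-dimensional component $Z$, Lemma~\ref{5.1} supplies a fibration $p\colon X\to V'$ with $Z=\widetilde p^*\Pic^0(\widetilde V')$, $\widetilde V'$ of maximal Albanese dimension, $q(V')=\dim Z=q$, $\chi(R^{n-k}\widetilde p_*\omega_{\widetilde X})>0$, and $\dim V'\le k$; the finite map to $\Alb(\widetilde V')$ comes from Remark~\ref{rmkalb}. To see that $p$ is a $\chi$-positive $k$-irrational pencil mapping to $Z$, I must upgrade $\dim V'\le k$ to $\dim V'=k$ together with $\chi(\omega_{\widetilde V'})>0$. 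This is where the hypothesis is used: since $\dim V^i(\omega_X)_0\le 0$ for $i>n-k$ while $\dim Z>0$, the generic $\alpha\in \Pic^0(\widetilde V')$ satisfies $H^i(\widetilde X,\omega_{\widetilde X}\otimes\widetilde p^*\alpha)=0$ for every $i>n-k$; and because the higher direct images $R^i\widetilde p_*\omega_{\widetilde X}$ are $GV$-sheaves, for generic $\alpha$ only the $a=0$ term survives, giving $H^i(\widetilde X,\omega_{\widetilde X}\otimes\widetilde p^*\alpha)=H^0(\widetilde V',R^i\widetilde p_*\omega_{\widetilde X}\otimes\alpha)$. Together these force $n-k$ to be the largest index $i$ with $\chi(R^i\widetilde p_*\omega_{\widetilde X})>0$.

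The main obstacle is to deduce that the top direct image $R^{\,n-\dim V'}\widetilde p_*\omega_{\widetilde X}\cong \omega_{\widetilde V'}$ is itself the last $\chi$-positive one, i.e. $\dim V'=k$ and $\chi(\omega_{\widetilde V'})>0$. I would argue by contradiction: if $\chi(\omega_{\widetilde V'})=0$, the Ein--Lazarsfeld/Chen--Hacon structure theory produces a fibration $\widetilde V'\to W$ onto a normal variety with $\chi(\omega_{\widetilde W})>0$ and $\dim W<\dim \widetilde V'$. Composing with $\widetilde p$ and applying Koll\'ar's splitting to $\widetilde r\colon \widetilde X\to \widetilde W$, the top summand $H^0(\widetilde W,\omega_{\widetilde W}\otimes\alpha)$ is nonzero for all $\alpha\in\Pic^0(\widetilde W)$, whence $\widetilde r^*\Pic^0(\widetilde W)\subseteq V^{\,n-\dim W}(\omega_X)_0$ with $n-\dim W>n-k$. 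If $\dim W\ge 1$ this subtorus is positive-dimensional, contradicting the hypothesis; the case $\dim W=0$ would force $\widetilde V'$ to be birational to an abelian variety, so that $q(\widetilde V')=\dim V'<q$, contradicting $q(V')=q$. Hence $\dim V'=k$ and $\chi(\omega_{\widetilde V'})>0$, so $p\in F_X^{k,q}$ and $u_{X,k,q}(p)=Z$. Making this structural dichotomy precise is where I expect the real work of the proof to lie.
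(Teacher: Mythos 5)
Your first two steps -- well-definedness of $u_{X,k,q}$ (Koll\'ar splitting plus $GV$ to get $\widetilde f^*\Pic^0(\widetilde V)\subseteq V^{n-k}(\omega_X)_0$, then Lemma~\ref{gamma} and Remark~\ref{rmkbirational} to show the image is a component and the map is injective) -- are correct and coincide with the paper's argument. The genuine gap is in the surjectivity step, precisely where you flagged it. Your case $\chi(\omega_{\widetilde V'})=0$ rests on an unproven structural claim: that a variety of maximal Albanese dimension with $\chi(\omega)=0$ admits a fibration onto a strictly lower-dimensional normal variety $W$ with $\chi(\omega_{\widetilde W})>0$, or else (when no positive-dimensional such $W$ exists) is birational to an abelian variety. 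This is not an off-the-shelf form of Ein--Lazarsfeld or Chen--Hacon; pinning it down in all dimensions would essentially require Chen--Jiang-decomposition-type machinery, and you do not supply a proof. There is also a secondary flaw in your $\dim W=0$ subcase: when $k=q$ the inequality $q(\widetilde V')=\dim V'<q$ that you invoke can fail (one would instead have to derive the contradiction from $\chi(R^{n-k}\widetilde p_*\omega_{\widetilde X})>0$ in Lemma~\ref{5.1}). So, as written, the proof is incomplete.

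The paper avoids the structure theory altogether by choosing a different dichotomy: not on $\chi(\omega_{\widetilde V'})$ but on $\dim V^0(\omega_{\widetilde V'})_0$. If $\dim V^0(\omega_{\widetilde V'})_0>0$, no $\chi$-positive quotient is needed: the top piece of Koll\'ar's splitting injects the locus $V^0(\omega_{\widetilde V'})_0$ \emph{itself} (rather than the full $\Pic^0$ of some auxiliary base $W$) into $V^{\,n-\dim V'}(\omega_X)_0$, and since $n-\dim V'>n-k$ this positive-dimensional locus through the origin contradicts the hypothesis $\dim V^i(\omega_X)_0\leq 0$ for $i>n-k$. If instead $V^0(\omega_{\widetilde V'})_0=\{\sO_{\widetilde V'}\}$, a single citable input (\cite[Proposition 2.2]{ein+laz:theta}, quoted in the paper's proof) gives that $alb_{\widetilde V'}$ is surjective, hence $q=q(\widetilde V')\leq \dim \widetilde V'<k\leq q$, a contradiction; this is the precise substitute for your appeal to ``birational to an abelian variety.'' Finally, note that $\chi$-positivity never needs to be established as a separate goal: once $\dim V'=k$ is known, Lemma~\ref{5.1} together with $R^{n-k}\widetilde p_*\omega_{\widetilde X}\simeq \omega_{\widetilde V'}$ gives $\chi(\omega_{\widetilde V'})>0$ for free, which is how the paper organizes the argument.
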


\section{Proofs}\label{secproof}
 \subsection{Proof of Theorem \ref{mainthm}} 
 \begin{proof}[Proof of Theorem \ref{mainthm}]
 Set $n=\dim X = \dim Y$
 and let $p_1$ and $p_2$ be the projections from $X\times Y$ onto   $X$ and $Y$, respectively.  
 The equivalence $\Phi$ is of Fourier--Mukai type, that is
 $\Phi(-) \simeq \Phi_{\sE}(-) = \rr p_{2*}\big( p_1^*(-) \stackrel{\rl}{ \otimes } \sE \big)$ for some object $\sE$ in $\rd(X\times Y)$. 
By \cite[Lemma 3.1]{popa+schnell:invariance} the induced Rouquier isomorphism 
$$F_{\sE} \; \colon \; \Aut^0(X)\, \times \, \Pic^0(X) \; \longrightarrow \; \Aut^0(Y) \, \times \, \Pic^0(Y)$$  acts as
\begin{equation}\label{rouquieraction}
F_{\sE}(\varphi,\alpha) \; = \; (\psi,\beta)\; \Longleftrightarrow \;
p_1^*\alpha \,\otimes \, (\varphi\times {\rm id}_Y)^*\sE \; \simeq \; p_2^*\beta \, \otimes \, ({\rm id}_X \times \psi)_*\sE.
\end{equation}
Moreover, by Theorem \ref{lopo} and Remark \ref{hodge}, $F_{\sE}$ induces an isomorphism of algebraic sets $V^{n-1}(\omega_X)_0\simeq V^{n-1}(\omega_Y)_0$. 
By \S\ref{secpreview} and Theorem \ref{fibrAlbprim} it follows that 
the composition $$\mu_g \; \stackrel{{\rm def}}{=} \; \big( u_{Y,1,g}^{-1} \, \circ \, F_{\sE} \, \circ \, u_{X,1,g} \big) \colon F^{g}_X \; \to \;  F^g_Y$$ defines a bijection of sets for any integer $g\geq 2$   such that,  if  $\mu_g(f\colon X\to C)=(h\colon Y\to D)$, then 
\begin{equation}\label{rouqset}
F_{\sE}\big({\rm id}_X, f^*\Pic^0(C)\big) \; = \; \big({\rm id}_Y,h^*\Pic^0(D)\big).
\end{equation}
Thus we only need to show that the curves  $C$ and $D$ are isomorphic. This is  achieved  by constructing a complex in $\rd(C\times D)\stackrel{{\rm def}}{=} D^b \big(\mathcal{C}oh(C\times D) \big)$  
whose support dominates both $C$ and $D$.

The equations \eqref{rouquieraction} and  \eqref{rouqset} define an isomorphism of abelian varieties $\zeta\colon \Pic^0(C)\, \rightarrow \, \Pic^0(D)$ such that
\begin{equation}\label{rouq1}
 p_1^*f^*\alpha \, \otimes \, \sE \; \simeq \; p_2^*h^*\zeta(\alpha) \, \otimes \, \sE \quad \mbox{ for all }\quad \alpha \in \Pic^0(C).
\end{equation}
We fix   a sufficiently ample line 
bundle $L$ on $X\times Y$ and define
$$\sE' \; \stackrel{\rm def}{=} \; \rr (f\times h)_*(\sE\otimes L).$$
Moreover, we  denote by $t_1$ and $t_2$ the projections from $C\times D$ onto the first and second factor, respectively. 
An application of the projection formula yields isomorphisms 
$$\rr(f\times h)_*\big( p_1^*f^*\alpha \otimes  \sE \otimes L \big)\, \simeq \,\rr(f\times h)_*\big((f\times h)^*t_1^*\alpha\otimes \sE \otimes L\big) \; \simeq \;
t_1^*\alpha \, \otimes \, \sE ', $$
$$\rr(f\times h)_*\big( p_2^*h^*\zeta(\alpha) \otimes  \sE \otimes L \big)\, \simeq \,\rr(f\times h)_*\big((f\times h)^*t_2^*\zeta(\alpha)\otimes \sE \otimes L\big) \; \simeq \;
t_2^*\zeta(\alpha) \, \otimes \, \sE ', $$
so that 
\begin{equation}\label{tensoriso}
t_1^*\alpha \, \otimes \, \sE ' \; \simeq \; t_2^* \zeta(\alpha)  \otimes  \sE ' \quad \mbox{ for all } \quad \alpha \in \Pic^0(C).
\end{equation}
Let  $i_b\colon t_2^{-1}(b) \hookrightarrow C\times D$ be the closed immersion of the fiber $t_2^{-1}(b)\simeq C$, and set  $\sE'_b \,  \stackrel{\rm def}{=}  \, \mathbf{L}i_b^*\sE'$.
By restricting the isomorphisms   \eqref{tensoriso} to every
 fiber of $t_2$,
 we obtain new isomorphisms:
\begin{equation}\label{each2}
\sE'_b \, \otimes \, \alpha \; \simeq \; \sE'_b\, \quad\mbox{ for  \; all }\quad
\alpha \in \Pic^0(C) \quad \mbox{and} \quad b\in D.
\end{equation}
We denote by $H^q(\sF)$ the $q$-th cohomology sheaf of a complex  $\sF$, and 
define the support of $\sF$ by  $\Supp(\sF)=\bigcup_q \Supp \big(H^q(\sF) \big)$.
 We endow ${\rm Supp} \big(\sF \big)$ with its reduced scheme structure.  
By taking cohomology in \eqref{each2}, we find   $H^q(\sE'_b ) \otimes \alpha \simeq H^q(\sE'_b)$ for all $q\in \rz$, $\alpha \in \Pic^0(C)$ and $b\in D$.
Thus by \cite[Lemma 3.3]{mukai:equivalence} we obtain
$$\dim \Supp \big( H^q(\sE_b') \big)  = \dim \Supp \big( (j_{C})_* H^q(\sE_b') \big) \leq 0$$ for all $q\in \rz$ and  $b \in D$ (here 
$j_C\colon C \hookrightarrow \Pic^0(C)$ denotes the Abel--Jacobi embedding). 
We conclude   $\dim \Supp \big(\sE_b' \big) \leq 0$ for all  $b\in D$ since $\sE_b'$ is a bounded complex. 
Moreover, from the isomorphisms 
 $${\rm Supp}\big(\sE_b'\big)\; = \; {\rm Supp}\big(\sE'\big)\,\cap \, t_2^{-1}(b) \quad \mbox{ for all }\quad b\in D$$
(\emph{cf}. \cite[Lemma 3.29]{huybrechts:fouriermukai}),
 we conclude  that  $t_2\colon \Supp(\sE')\to D$ is a finite morphism and 
 $\dim {\rm Supp}\big(\sE'\big)\, \leq \, 1.$ Also, in a complete analogy, also     $t_1 \colon  \Supp(\sE')\to C$ is a finite morphism.

Now consider the following commutative diagram:
$$
\centerline{ \xymatrix@=28pt{
 \Supp\big(\sE\otimes L\big) \ar[r]^{\;\;\;\;\;\;\;\;p_1} \ar[d]^{f\times h} & X\ar[d]^{f} \\
\Supp\big(\sE'\big) \ar[r]^{\;\;\;\; t_1} & C. \\}} 
 \noindent
  $$
 The left arrow of the above diagram is well-defined thanks to  the following claim.
 
 \begin{claim}\label{claim1}
The restriction of $f\times h$ to ${\rm Supp} \big(\sE \otimes L\big) = {\rm Supp} \big(\sE\big)$ defines a  morphism  $( f\times h ) \colon {\rm Supp} \big(\sE \big) \to {\rm Supp}\big(\sE'\big)$ of algebraic sets.
\end{claim}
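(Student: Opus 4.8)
The plan is to establish the stronger set-theoretic equality $(f\times h)\big(\Supp(\sE)\big)=\Supp(\sE')$; the claim then follows immediately, since the containment $(f\times h)\big(\Supp(\sE)\big)\subseteq \Supp(\sE')$ is exactly what is needed for $f\times h$ to restrict to a morphism between these reduced algebraic sets. I begin with two reductions. Because $L$ is invertible, tensoring by it is exact and leaves every cohomology sheaf unchanged, so $\Supp(\sE\otimes L)=\Supp(\sE)$. Writing $g\stackrel{{\rm def}}{=}f\times h$, the morphism $g$ is projective, being a product of the projective fibrations $f$ and $h$; consequently $L$ is $g$-ample, and the unconditional inclusion $\Supp\big(\rr g_*(-)\big)\subseteq g\big(\Supp(-)\big)$ already gives $\Supp(\sE')\subseteq g\big(\Supp(\sE)\big)$ for free. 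The whole role of the hypothesis that $L$ be sufficiently ample is to secure the \emph{reverse} inclusion, which is the real content.

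For that reverse inclusion I would invoke relative Serre vanishing. As $\sE$ has only finitely many nonzero cohomology sheaves $H^q(\sE)$ and $L$ is $g$-ample, for $L$ positive enough we have $R^pg_*\big(H^q(\sE)\otimes L\big)=0$ for all $p>0$ and all $q$. Hence the spectral sequence $E_2^{p,q}=R^pg_*\big(H^q(\sE)\otimes L\big)\Rightarrow H^{p+q}(\sE')$ degenerates, and we obtain identifications $H^q(\sE')\simeq g_*\big(H^q(\sE)\otimes L\big)$ for every $q$.

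It then suffices to treat each coherent sheaf $\sG\stackrel{{\rm def}}{=}H^q(\sE)$ individually and prove $\Supp\big(g_*(\sG\otimes L)\big)= g\big(\Supp(\sG)\big)$. For $L$ sufficiently ample the relative evaluation map $g^*g_*(\sG\otimes L)\to \sG\otimes L$ is surjective; therefore, if the stalk of $\sG\otimes L$ is nonzero at a point lying over $w\in C\times D$, then the stalk of $g_*(\sG\otimes L)$ at $w$ is nonzero as well, which yields $g\big(\Supp(\sG)\big)\subseteq \Supp\big(g_*(\sG\otimes L)\big)$. Taking the union over the finitely many indices $q$ and combining with the automatic inclusion from the first paragraph gives $\Supp(\sE')=g\big(\Supp(\sE)\big)$, completing the argument.

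The main obstacle is precisely this reverse inclusion $g\big(\Supp(\sE)\big)\subseteq \Supp(\sE')$: pushing a complex forward can in general destroy support through cohomological cancellation, and it is the ample twist that rules this out, via Serre vanishing to eliminate the higher direct images and via relative global generation to prevent sections from dying on the fibers of $g$. A minor technical point is that one must fix a single line bundle $L$ positive enough to work simultaneously for all the finitely many sheaves $H^q(\sE)$; this is harmless, as only finitely many vanishing and global-generation conditions are imposed.
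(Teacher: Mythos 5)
Your proposal is correct and follows essentially the same route as the paper's proof: both establish the stronger equality $(f\times h)\big(\Supp(\sE)\big)=\Supp(\sE')$ by using relative Serre vanishing to degenerate the spectral sequence and identify $H^q(\sE')\simeq (f\times h)_*\big(H^q(\sE)\otimes L\big)$, and then relative global generation (surjectivity of the evaluation map) to show the support of each pushforward equals the image of the support. The only differences are cosmetic, e.g.\ you spell out the automatic inclusion $\Supp(\sE')\subseteq (f\times h)\big(\Supp(\sE)\big)$ separately, which the paper folds into the displayed equality of supports.
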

\begin{proof}
As $L$ is sufficiently ample, on the one hand we have  the following  vanishings 
$$R^p(f\times h)_* \big(H^q(\sE)\otimes L \big) \; = \; 0\quad \mbox{ for all } \quad p>0 \quad \mbox{ and } \; q\in \rz.$$ 
On the other hand there are surjections $$(f\times h)^*(f\times h)_* \big(H^q(\sE)\otimes L \big) \; \twoheadrightarrow \; H^q(\sE)\otimes L\quad \mbox{ for all }\;\;  q.$$ 
Therefore, by the degeneration of the spectral sequence 
$$E^{p,q}_2 \; := \; R^p(f\times h)_*(H^q(\sE)\otimes L) \; \Rightarrow \; R^{p+q} (f\times h)_*(\sE\otimes L),$$ 
we obtain the following isomorphisms for all $q$:
$$H^q(\sE') \;  = \; R^q(f\times h)_* \big(\sE\otimes L \big) \; \simeq \; (f\times h)_* \big(H^q(\sE)\otimes L \big)$$ and  
$$\Supp \big((f\times h)_* \big(H^q(\sE)\otimes L \big) \big) \; = \; (f\times h)\big(\Supp \big(H^q(\sE)\otimes L \big)\big).$$ 
We conclude   
$\Supp \big(H^q(\sE') \big) \; \simeq \; (f\times h)\big(\Supp \big(\,H^q(\sE)\otimes L \big) \big),$ from which we deduce $${\rm Supp}\,\big(\sE'\big)\, = \, \bigcup_q \Supp\big(H^q(\sE') \big) \, \simeq \, \big(f\times h\big)\Big(\bigcup_q \Supp\big(H^q(\sE) \otimes L \big) \Big) \;=\; (f\times h)\big({\rm Supp}\,\big(\sE \otimes L\big)\big).$$
\end{proof} 
  
 
 To conclude the proof, 
 we observe   the projection $p_1$ is surjective with connected fibers (\emph{cf}. \cite[Lemmas 6.4 and 6.11]{huybrechts:fouriermukai}). Thus,   $t_1\colon \Supp\big(\sE'\big)\rightarrow C$ is surjective with connected fibers.
It follows that $\Supp \big(\sE' \big)$ is irreducible, and that $t_1$ is an  isomorphism by \cite[Corollary 4.4.3]{liu}.
In complete analogy, we have $\Supp\big(\sE'\big) \simeq D$.  
\end{proof}
 
\begin{rmk}
As noted in \cite{caucci+lombardi:irregular}, \cite{caucci+lombardi+pareschi:invariants2}, the argument of 
the proof of Theorem \ref{mainthm} can be employed to study the 
derived invariance of the finite part of the Stein factorization of the composition between the Iitaka fibration and the 
Albanese map of its base variety.
\end{rmk} 
%
%

	\subsection{Proof of Theorem \ref{mainthm2}}
	\begin{proof}[Proof of Theorem \ref{mainthm2}]

As in the proof of Theorem \ref{mainthm} we have  $\Phi(-) \simeq \Phi_{\sE}(-) =  \rr p_{2*}\big( p_1^*(-) \stackrel{\rl}{ \otimes } \sE \big)$ 
where $p_1,p_2$ are  the projections from $X\times Y$ onto   $X$ and $Y$, respectively, and $\sE$ is an object in 
$\rd(X\times Y)$.
Let $(v \colon X \to S)  \in F_X^{2,q}$ be the isomorphism class of 
a $\chi$-positive $2$-higher irrational pencil $v\colon X \to S$ with $q(S)=q$.
Moreover, let  $(\pi_X \colon \widetilde X \to X, \, \widetilde v \colon \widetilde X \to \widetilde S)$ be a non-singular representative of $v$.
By Corollary \ref{corfibration}, the abelian variety $Z\stackrel{{\rm def}}{=}\pi_{X*} \widetilde v^* \Pic^0(\widetilde S)$ is  an irreducible component of $V^{n-2}(\omega_X)_0$ which  does not depend on the choice of the non-singular representative $(\pi_X, \widetilde v)$.
By Theorem \ref{lopo}, the Rouquier isomorphism $F_{\sE}$ induced by  $\Phi_{\sE}$  yields an irreducible component 
 $Z' \stackrel{\rm def}{=} F_{\sE}({\rm id}_X,Z)\subset V^{n-2}(\omega_Y)_0$. This in turn induces,  by means of Lemma \ref{5.1}, a fibration 
 $w\colon Y \to T$ onto a normal projective variety $T$ such that $0< \dim T \leq 2$ and $q(T)=q\geq 2$.
If $(\pi_Y \colon \widetilde Y \to Y, \,  \widetilde w \colon \widetilde Y \to \widetilde T)$  is a non-singular representative of $w$, then  the following properties hold:  
 $(i)$  there exists a morphism 
$b\colon T\to \Alb(\widetilde T)$ finite onto its image such that the composition $\widetilde T \to T \stackrel{b}{\to} \Alb(\widetilde T)$ equals 
$alb_{\widetilde T}$, $(ii)$ 
$\chi(R^{n-2}\widetilde w_*\omega_{\widetilde Y} )>0$, and $(iii)$ $Z'= \pi_{Y*} \widetilde  w^*\Pic^0(\widetilde T) = \pi_{Y*} \widetilde w^*V^0(R^{n-2}\widetilde w_*\omega_{\widetilde Y})$.

We claim   $\dim T=2$.  If, by contradiction, we   had  $\dim T=1$, 
then the following facts would be true: $(i)$ $\widetilde T \simeq T$, $(ii)$ $V^0(\omega_T)=\Pic^0(T)$,\footnote{Any smooth projective curve of genus $g\geq 2$ satisfies $V^0(\omega_C)=\Pic^0(C)$. 
On the contrary, in higher dimension  there are varieties $Z$ of maximal Albanese dimension such that  $q(Z)>\dim Z$ and $V^0(\omega_Z)\subsetneq \Pic^0(Z)$.  For instance,  isotrivial elliptic surfaces $S$  fibered over curves of genus at least two  provide counterexamples for any $q(S)\geq 3$. This is the reason why  the arguments of this  proof do not extend to fibrations in $F_X^{k,q}$ with $k\geq 3$.} and $(iii)$  $w^*\Pic^0(T)=Z'\subset V^{n-1}(\omega_Y)_0$  (\emph{cf}. \cite[Lemma 6.3]{lombardi:invariants}). Hence, by Remark \ref{hodge} and Theorem \ref{lopo},  the component  $Z$ is contained in $V^{n-1}(\omega_X)_0$,  and by Lemma \ref{5.1}    it  induces an irrational pencil $u\colon X\to B$ of genus $q$ such that $Z=u^*\Pic^0(B)$. By Lemma \ref{gamma2}, we  get an isomorphism between $B$ and $S$ which is impossible. We conclude    $T$ is a surface, and  the fibration $w \colon Y \to T$ defines   a class in  $F_Y^{2,q}$ (\emph{cf}. Corollary \ref{corkollar}). Moreover, the argument also shows    the assignments $u_{X,2,q} \colon F_X^{2,q} \to  \pi_0^q\big(V^{n-2}(\omega_X)_0\big) $ and $u_{Y,2,q} \colon F_Y^{2,q} \to \pi_0^q\big(V^{n-2}(\omega_Y)_0\big)$  are surjective, and hence bijective
by Proposition \ref{prop:bijinj}. Finally,  by Theorem \ref{lopo},  the composition 
$$\nu_q \, \stackrel{{\rm def}}{=} \, \big( u_{Y,2,q}^{-1}  \, \circ \,F_{\sE} \, \circ \, u_{X,2,q} \big) \colon F^{2,q}_X \; \to \; F^{2,q}_Y$$ defines a bijection of sets for all $q\geq 2$ such that, if $(v \colon X\to S)\in F_X^{2,q}$ and $\nu_q(v)=(w\colon Y\to T)$, then 
\begin{equation}\label{isor2}
F_{\sE}\big({\rm id}_X, \pi_{X*} \widetilde v^*\Pic^0(\widetilde S)\big) \; = \;  \big({\rm id}_Y, \pi_{Y*} \widetilde w^*\Pic^0(\widetilde T)\big).
\end{equation}
 In the rest of the proof we will show   the surfaces $S$ and $T$ are isomorphic.
 
We consider    non-singular representatives of $v$ and $w$ as follows:
$$
 \xymatrix@=28pt{
\widetilde X \ar[r]^{\pi_X} \ar[d]^{\widetilde v} & X\ar[d]^{v} \\
\widetilde S \ar[r]^{\rho_S} & S }\quad \quad\quad \quad \quad 
\xymatrix@=28pt{
\widetilde Y \ar[r]^{\pi_Y} \ar[d]^{\widetilde w} & Y\ar[d]^{w} \\
\widetilde T \ar[r]^{\rho_T}  & T. \\} 
   $$ 
 By    Remarks \ref{rmkalb} and \ref{rmkresolution}, the surfaces $S$ and $T$ are equipped with   morphisms $a\colon S \to \Alb(\widetilde S)$ and 
$b\colon T \to \Alb(\widetilde T)$, respectively,  finite onto their images, such that $a\circ \rho_S = alb_{\widetilde S}$ and $b\circ \rho_T = alb_{\widetilde T}$.
We set  $\pi = \pi_X\times \pi_Y$ and denote by  $\xi\colon \Pic^0(\widetilde S)\to \Pic^0(\widetilde T)$  the isomorphism induced by \eqref{isor2}. By Proposition \ref{picbir},  $\xi$ induces an isomorphism $\Pic^0(S)\simeq \Pic^0(T)$, which we continue to denote, with a slight abuse of notation, by $\xi$.
In this way we have $\xi \circ \rho_S^* = \rho_T^* \circ \xi$.
 Finally, let $\widetilde{p}_1$ and $\widetilde{p}_2$ be  the projections from $\widetilde{X} \times \widetilde{Y}$ onto the first and second factor, respectively. By the  equality \eqref{isor2} and the description of the action of $F_{\sE}$ in  \eqref{rouquieraction}, we obtain  a collection of isomorphisms:
\begin{equation}\label{isopullback2}
\pi_* \, \widetilde{p}_1^* \, \widetilde{v}^*\alpha \, \otimes \, \sE \; \simeq \; \pi_* \, \widetilde p_2^* \, \widetilde w^*\xi (\alpha) \, \otimes \, \sE \quad \mbox{ for all }\quad \alpha \in \Pic^0(\widetilde S).
\end{equation}
By Proposition \ref{picbir},   the isomorphisms \eqref{isopullback2} are equivalent to the following isomorphisms:
$$\pi_* \, \widetilde{p}_1^* \, \widetilde{v}^*\rho_S^*\gamma \, \otimes \, \sE \; \simeq \; \pi_* \, \widetilde p_2^* \, \widetilde w^*\rho_T^*\xi (\gamma) \, \otimes \, \sE \quad \mbox{ for all }\quad \gamma \in \Pic^0(S).$$
Denote now by $t_1$ and $t_2$ the projections from $S\times T$ onto $S$ and $T$, respectively.
By tensoring the previous isomorphisms by a sufficiently ample line bundle $L$ on $X\times Y$, and by pushing them forward to $S\times T$, we obtain new isomorphisms:
\begin{equation}\label{tensoriso2}
t_1^*\gamma \, \otimes \, \sE' \; \simeq \; t_2^*\xi(\gamma) \, \otimes \, \sE' \quad \quad \mbox{for all}\quad \gamma \in \Pic^0(S),
\end{equation} where 
$$\sE' \; \stackrel{{\rm def}}{=} \; \rr (v\times w)_* \big( \sE \otimes L\big).$$ 
 We set $$\sE''\stackrel{{\rm def}}{=}\rr(a\times b)_*\sE' = (a\times b)_*\sE'$$ and denote by $q_1$ and $q_2$  the two projections from $\Alb(\widetilde S)\times \Alb(\widetilde T)$ onto the first and second factor, respectively. 
By pushing the isomorphisms \eqref{tensoriso2} forward to $\Alb(\widetilde S)\times \Alb(\widetilde T)$, we obtain new isomorphisms:
$$ (a\times b)_* \big(t_1^*\gamma \otimes \sE' \big) \; \simeq \;  (a \times b)_* \big(t_2^*\xi(\gamma) \otimes \sE' \big) \quad \quad \mbox{ for all } \quad \gamma \in \Pic^0(S).$$
By noting that every $\gamma\in \Pic^0(S)$  can be written as $\gamma = a^* \beta$ for a unique $\beta \in \Pic^0 \big(\Alb(\widetilde S) \big)$, the projection formula yields   
(again with a slight abuse of notation)\footnote{We continue to denote by $\xi \colon \Pic^0 \big(\Alb(\widetilde S) \big) \to \Pic^0 \big(\Alb(\widetilde T) \big)$ the morphism induced by 
$\xi \colon \Pic^0(S ) \to \Pic^0(T)$. With this notation we find $\xi \circ a^* = b^* \circ \xi$.
}
\begin{equation}\label{iso23}
q_1^*\beta \, \otimes \, \sE'' \; \simeq \; q_2^*\xi(\beta) \, \otimes  \, \sE'' \quad \quad \mbox{ for all }\quad \beta \in \Pic^0 \big(\Alb(\widetilde S) \big).
\end{equation}
Denote   by $\sE''_s \stackrel{{\rm def}}{=} \sE''_{|q_1^{-1}(s)}$ the derived restriction of $\sE''$ to the fiber  $q_1^{-1}(s)$.
By restricting the isomorphisms \eqref{iso23} to every fiber of $q_1$, we have isomorphisms 
$$\sE''_s \; \simeq \; \sE''_s\otimes \xi(\beta)  \quad \quad \mbox{ for all } \quad \beta \in \Pic^0\big(\Alb(\widetilde S) \big) \quad \mbox{ and  \quad } s \in \Alb(\widetilde S).$$
As the complex  $\sE''_s$ has only a finite number of non-zero cohomology sheaves, Mukai's lemma \cite[Lemma 3.3]{mukai:equivalence} implies   $\dim \Supp \big(\sE_s''\big) \leq 0$ for all $s$. Similarly, we also have $\dim \Supp\big( \sE''_{|q_2^{-1}(t)} \big) \leq 0$ for all $t\in \Alb(\widetilde T)$. 
By \cite[Lemma 3.29]{huybrechts:fouriermukai} the  projections  $$q_1\colon \Supp \big(\sE'')\to \Alb(\widetilde S)\quad \mbox{ and }\quad  
q_2\colon \Supp \big(\sE'')\to \Alb(\widetilde T)$$ are finite morphisms and  $\dim \Supp\big(\sE''\big)\leq 2$.
Moreover, by the argument of Claim \ref{claim1}, there are commutative diagrams 
$$
 \xymatrix@=28pt{
 \Supp \big(\sE'  \big) \ar[r]^{\,\;\;\;\;\;t_1} \ar[d]^{a \times b} & S\ar[d]^{a} \\
\Supp \big(\sE''\big) \ar[r]^{\;\;\;\; q_1} & \Alb(\widetilde S)\\} \quad \quad \quad \quad 
 \xymatrix@=28pt{
 \Supp \big(\sE'  \big) \ar[r]^{\,\;\;\;\;\;t_2} \ar[d]^{a \times b} & T\ar[d]^{b} \\
\Supp \big(\sE''\big) \ar[r]^{\;\;\;\; q_2} & \Alb(\widetilde T)\\}
  $$
from which we also  infer that also the morphisms  $t_1$ and $t_2$ are finite.
In order to conclude the proof,  we consider  the following commutative diagrams:
$$
 \xymatrix@=28pt{
 \Supp \big(\sE  \big) \ar[r]^{\,\;\;\;\;\;\,\;\;\;\;\;p_1} \ar[d]^{v\times w} & X\ar[d]^{v} \\
\Supp \big(\sE'\big) \ar[r]^{\;\;\;\; t_1} & S\\} \quad \quad \quad \quad 
 \xymatrix@=28pt{
 \Supp \big(\sE  \big) \ar[r]^{\,\;\;\;\;\;\,\;\;\;\;\; p_2} \ar[d]^{v\times w} & Y\ar[d]^{w} \\
\Supp \big(\sE'\big) \ar[r]^{\;\;\;\; t_2} & T.\\}
  $$
As $p_1 \colon \Supp \big(\sE  \big)  \to X$ and $p_2 \colon \Supp \big(\sE  \big)  \to Y$ are surjective   with connected fibers 
(\emph{cf}. \cite[Lemmas 6.4 and 6.11]{huybrechts:fouriermukai}),    the morphisms  $t_1 \colon  \Supp \big(\sE'  \big)  \to S$ and $t_2\colon  \Supp \big(\sE'  \big)  \to T$ also are surjective   with  connected fibers. Therefore $\Supp\big(\sE'\big)$ is irreducible and   $S\simeq \Supp\big(\sE'\big) \simeq T$.  
\end{proof}

In the second part of the previous proof the dimensions of the varieties $S$ and $T$ did not play a role. In fact the argument extends to fibrations of  higher-dimensional bases as follows.

\begin{prop}\label{deriso}
Suppose $\Phi_{\sE} \colon \rd(X) \stackrel{\sim}{\longrightarrow} \rd(Y)$ is an equivalence of triangulated categories and let $F_{\sE}$ be the induced Rouquier isomorphism.  
Let $(f \colon X \to V)\in F_X^{k,q}$ and $(h \colon Y \to W)\in F_Y^{k,q}$ be $\chi$-positive higher irrational  pencils with $q\geq k\geq 1$. 
If $$F_{\sE}\big({\rm id}_X , \pi_{X*}\widetilde f^* \Pic^0(\widetilde V) \big) \,  = \,  \big({\rm id}_Y , \pi_{Y*}\widetilde h^* \Pic^0(\widetilde W) \big)$$ for some non-singular representatives 
$(\pi_X \colon \widetilde X \to X,\; \widetilde f \colon \widetilde X \to \widetilde V)$ and $(\pi_Y \colon \widetilde Y \to Y, \; \widetilde h \colon \widetilde Y \to \widetilde W)$  of $f$ and $h$, respectively, then  $V$ and $W$ are isomorphic.
\end{prop}

\subsection{Proof of Theorem \ref{introb}} 
 Let $X$ be a smooth projective variety of dimension $n$. Before proving Theorem \ref{introb}, we recall   the integer  $b_{\chi>0}(X) \in \{0,\ldots, n-1\}$ is defined as  the minimal dimension of a  base variety of a $\chi$-positive $k$-higher irrational pencil with  $k \in \{ 1 , \ldots , n - 1 \}$. Moreover, we declare  $b_{\chi>0}(X) = 0$ if and only if $X$ does not admit any $\chi$-positive $k$-higher irrational pencil for all  $k \in \{ 1 , \ldots , n - 1 \}$. See also \cite[\S 4]{caucci+pareschi:invariants}.

\begin{prop}\label{propdimless}
Let $0<k<n$ be an integer and let $Z$ be  an irreducible component of $V^{n-k}(\omega_X)_0$ of dimension $\dim Z \geq k$. Denote by 
$f\colon X \to V$ the fibration induced by $Z$ as in Lemma \ref{5.1}. If $\dim V<k$, 
 then $X$ admits a fibration $p' \colon X\to U$ in $F_X^{k',q'}$ for some integers $0<k'< k$ and $2\leq q'\leq \dim Z$. Moreover,  there exists a dominant rational map $\gamma \colon V \dashrightarrow U$ such that $p'=\gamma \circ f$.

\end{prop}
\begin{proof}
Let $\widetilde f \colon \widetilde X \to \widetilde V$ be a non-singular representative of $f$, and note that 
$q(\widetilde V) = q(V) = \dim Z \geq k > \dim \widetilde V$.
Hence  $alb_{\widetilde V}$ is not surjective and,
 by \cite[Proposition 2.2]{ein+laz:theta},  $\sO_{\widetilde V}$ is not an isolated point in $V^0(\omega_{\widetilde V})$. We conclude that   $\dim V^0(\omega_{\widetilde V})_0>0$. 
If $V^0(\omega_{\widetilde V})=\Pic^0(\widetilde{V})$, then  by Lemma \ref{lemchi} we   have
$\chi(\omega_{\widetilde V})>0$ so that     $f$ defines a class of fibrations in $F_X^{\dim V,q}$ with $q=q(V) = \dim Z$. 
Suppose now that $V^0(\omega_{\widetilde V}) \neq \Pic^0(\widetilde{V})$ and $Z'' \subset V^0(\omega_{\widetilde V})_0$ is a positive-dimensional irreducible component of codimension $0<j\stackrel{{\rm def}}{=}\codim Z''<q(\widetilde{V})$.  Then $Z''$ is also an irreducible component of $V^j(\omega_{\widetilde V})_0$ (\emph{cf}. \cite[Corollary 3.3]{pareschi:standard}). Therefore, by Lemma \ref{5.1}, there exists a further fibration $f'\colon \widetilde{V} \to V'$ such that $$0 \; < \; \dim V' \; \leq \; \dim V -j \; < \; k-j.$$
Moreover, 
if $\widetilde{f'} \colon \widetilde{\widetilde V} \to \widetilde {V'}$ is a non-singular representative of $f'$, then  $\chi(R^j\widetilde{f'}_*\omega_{\widetilde{\widetilde{V}}})>0$  and  $Z''=\widetilde {f'}^*\Pic^0(\widetilde{V'})$. 
Moreover, by the previous argument, we still  have $\dim V^0(\omega_{\widetilde{V'}})_0>0$ since 
$$q(\widetilde {V'}) \, = \, \dim Z'' \, = \, q(\widetilde V) - j \, = \, q(V) - j \, \geq \, \dim V -j \, = \, k-j \, > \, \dim \widetilde{V'}.$$ 
Hence we obtain a fibration $l\colon \widetilde X \to V'$ with the same properties of  $f$, but satisfying  $\dim V'<\dim V$.
Proceeding in this way, we will  eventually construct   a fibration $p\colon X^* \to U$ in $F_{X^*}^{k',q'}$ where $X^*$ is a smooth projective variety birational to $X$ with  $0<k' = \dim U <k$ and $\dim U\leq q' \stackrel{\rm def}{=} q(U) \leq q(V)$. However,  if $U$ were a curve, then we must have $q'\geq 2$, as,  by \cite[Corollaire 2.3]{beauville:h1}, the $(n-1)$-th non-vanishing locus attached to the canonical bundle  does not contain any component of dimension one passing through the origin. 
By Proposition \ref{extension} the 
rational dominant map $X\dashrightarrow U$ extends to a morphism $p'$ such that the composition $X^*\to X \stackrel{p'}{\to } U$ equals $p$.
The second statement   is a  consequence of the construction.
\end{proof}

   The following proposition is inspired by \cite[Claim 4.9]{caucci+pareschi:invariants}.  

  \begin{prop}\label{propb}
We have $b_{\chi>0}(X) > 0$ if and only if there exists an index $i\in \{1,\ldots , n-1\}$ such that $\dim V^i(\omega_X)_0 \geq  n-i$.   
  Moreover, if $b_{\chi>0}(X)>0$, then $n-b_{\chi>0}(X)$ is the largest integer $0 < i < n$ such that $\dim V^i(\omega_X)_0 \geq n-i$.
    \end{prop}

\begin{proof}

Suppose  there exists an irreducible component $Z \subset V^i (\omega_X)_0$ of dimension $\dim Z \geq n-i$ for some $i>0$.
Let $f\colon X \to V$ be the fibration induced by $Z$, as in  Lemma \ref{5.1}, with $\dim V \leq n-i$.
We distinguish two cases: $\dim V = n-i$ and $\dim V < n-i$. If $\dim V= n-i$, then, by Corollary \ref{corkollar}, we have  $f \in F_X^{n-i, \dim Z}$.
On the other hand, if $\dim V < n-i$, then, by Proposition \ref{propdimless}, we can construct a fibration in $F_X^{k',q'}$ with $0<k'<n-i$ and $2\leq q'\leq \dim Z$. 
In both cases we have $b_{\chi>0}(X)>0$. 

Conversely, if $f\colon X \to V$ is a $\chi$-positive $b$-higher irrational pencil with $b=b_{\chi>0}(X)>0$, then, by Corollary \ref{corfibration}, $\pi_* \widetilde f^*\Pic^0(\widetilde V)$ is an irreducible component of $V^{n-b}(\omega_X)_0$ of dimension $q(V)$
(as usual $(\pi,\widetilde f)$ denotes a non-singular representative of $f$). Since $\widetilde V$ is of maximal Albanese dimension, we have $q(V)\geq \dim V=b$.
 The second claim is proved similarly.
%
\end{proof}


\begin{prop}\label{proponeb}
Suppose   $b\stackrel{\rm def}{=}b_{\chi>0}(X)>0$ and let $q\geq b$ be an integer.
Then the assignment
$$u_{X,b,q} \colon F_X^{b, q} \; \to \; \pi_0^q \big(V^{n-b} (\omega_X)_0\big),\quad \big(f\colon X\to V \big) \, \mapsto \, \pi_* \widetilde{f}^*\Pic^0(\widetilde V)$$ as defined in Proposition \ref{prop:bijinj} is well-defined and it defines  a bijection of sets.
\end{prop}

\begin{proof}
Proposition \ref{prop:bijinj} shows   the map $u_{X,b,q}$ is well-defined and injective.
To show   $u_{X,b,q}$ is also surjective, we denote by $Z\subset V^{n-b}(\omega_X)_0$  an arbitrary irreducible component of dimension $q$. By Lemma \ref{5.1} the component $Z$ induces a fibration $f\colon X \to V$ onto a normal projective variety $V$  with $\dim V\leq b$. If $\dim V<b$, then by Proposition \ref{propdimless} we can construct a fibration in $F_X^{k',q'}$ with $0<k'<b$ and $2\leq q'\leq q$. This contradicts the definition of $b$. Therefore $\dim V = b$, and $f\in F_X^{b,q}$ by Corollary \ref{corkollar}.

\end{proof}

\begin{proof}[Proof of Theorem \ref{introb}.]
By Proposition \ref{propb}, the integers $b_{\chi>0}(X)$ and $b_{\chi>0}(Y)$  only depend on the dimensions of the non-vanishing loci $V^i(\omega_X)_0$ and 
$V^i(\omega_Y)_0$ for all $i\geq 0$.
Therefore,  by Theorem \ref{lopo}, we have   $b_{\chi>0}(X) = b_{\chi>0}(Y)$. 
We set $b= b_{\chi>0}(X) = b_{\chi>0}(Y)$.  
Denote now by $F_{\sE}$  the Rouquier isomorphism induced by the equivalence $\Phi = \Phi_{\sE} \colon \rd(X) \to \rd(Y)$. 
By  Proposition \ref{proponeb} and Theorem \ref{lopo}
 the following assignments 
$$\sigma_q \; \stackrel{{\rm def}}{=} \; \big( u^{-1}_{Y,b,q} \,  \circ \, F_{\sE} \,  \circ \, u_{X,b,q} \big) \colon F_X^{b,q} \; \to \; F_Y^{b,q}$$
are  bijections  for all  $q\geq b$. Moreover, if $\sigma_q (f \colon X \to V ) = (h \colon Y \to W)$, then we have
$$F_{\sE} \big( {\rm id}_X , \pi_{X*} \widetilde f^* \Pic^0({\widetilde V}) \big) \; = \; \big(  {\rm id}_Y , \pi_{Y*} \widetilde h^* \Pic^0({\widetilde W} )\big)$$
where  $(\pi_X \colon \widetilde X \to X, \; \widetilde f \colon \widetilde X \to \widetilde V)$ and 
$(\pi_Y \colon \widetilde Y  \to Y, \;  \widetilde h \colon \widetilde Y \to \widetilde W)$ are  non-singular representatives of $f$ and $h$, respectively.
The proof that $V$ and $W$ are isomorphic follows by Proposition \ref{deriso}.
\end{proof}

\section{Applications to the fundamental group}\label{secapplications}

The fundamental group $\pi_1(X)$ of a  smooth projective variety is not in general a derived invariant, as shown for instance by Schnell in \cite{schnell:fundamental}. Nevertheless, as an application of Theorem \ref{mainthm}, we deduce the derived invariance of the following property. In this section $X$ and $Y$ denote two smooth projective varieties.
\begin{cor}\label{introcor1}
  If $\rd(X) \simeq \rd(Y)$ and $g\geq 2$ is an integer,  then   $\pi_1(X)$ admits a surjective homomorphism onto the fundamental group of a compact Riemann surface of genus $g$ if and only if $\pi_1(Y)$ does. 
\end{cor}

\begin{proof}
In the Appendix of \cite{catanese:moduli}, Beauville proves that, for any $g\geq 2$,  there exists a surjective homomorphism $\pi_1(X)\rightarrow \Gamma_g$ onto the fundamental group of a compact Riemann surface of genus $g$ if and only if $X$ admits an irrational pencil of genus greater or equal to $g$. The  corollary  follows by Theorem \ref{mainthm}.
\end{proof}
%

\begin{rmk} 
In \cite[Theorem 1.10]{catanese:moduli} (\emph{cf}. also \cite[Theorem 1.1]{kot:paper}), the author relates the existence of irrational pencils  of genus $g\geq 2$ to the existence  of maximal isotropic subspaces in $H^1(X,\rc)$  (we recall  subspace $U\subset H^1(X,\rc)$ is   \emph{isotropic} if the map $U\wedge U \to H^2(X,\rc)$ is the trivial map). As an  application of  Theorem \ref{mainthm}, we deduce   $X$ admits a maximal isotropic subspace $U\subset  H^1(X,\rc)$ of dimension $g\geq 2$ if and only if $Y$ does.  More refined statements along  these lines can be formulated by means of \cite[Theorem 2.25]{catanese:moduli}.
\end{rmk}

A further application of Theorem \ref{mainthm} involves the 
cup product map $$\varphi_{X}\colon H^1(X,\rc)\wedge H^1(X,\rc)\to H^2(X,\rc).$$
In view of Castelnuovo--De Franchis' theorem, Theorem \ref{mainthm}  implies that, if $\rd(X) \simeq \rd(Y)$, then
$\varphi_X$ is injective on pure forms of type $\omega_1\wedge \omega_2$   if and only if 
$\varphi_Y \colon H^1(Y,\rc)\wedge H^1(Y,\rc)\to H^2(Y,\rc)$ has the same property. A more general statement holds if the geometric genus $p_g(X)$ is equal to one.
\begin{cor}\label{introcor3}
Suppose    $\rd(X) \simeq \rd(Y)$ and $p_g(X) =1$. Then 
$\varphi_X$ is injective if and only if 
$\varphi_Y$ is injective.
\end{cor}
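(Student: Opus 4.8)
The plan is to show that, once $p_g(X)=1$, injectivity of $\varphi_X$ is equivalent to the non-existence of an irrational pencil of genus $g\geq 2$ on $X$, a property that Theorem~\ref{mainthm} already recognizes as a derived invariant. First I would note that the hypothesis transfers to $Y$: Conjecture $(H_n^n)$ holds by Remark~\ref{hodge}, so $p_g=h^{0,n}$ is a derived invariant and $p_g(Y)=p_g(X)=1$. Granting the equivalence above for both $X$ and $Y$, the corollary is immediate, since Theorem~\ref{mainthm} gives bijections $F_X^g\simeq F_Y^g$ for all $g\geq 2$, so $X$ admits a genus-$\geq 2$ pencil if and only if $Y$ does.

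Next I would study $\ker\varphi_X$ Hodge-theoretically. As cup product of degree-one classes is functorial and $alb_X^*\colon H^1(\Alb(X),\rc)\to H^1(X,\rc)$ is an isomorphism, $\varphi_X$ is identified with the restriction of $alb_X^*$ to $H^2(\Alb(X),\rc)=\Lambda^2 H^1(X,\rc)$; thus $\ker\varphi_X$ is a sub-Hodge structure of weight two and splits as $K^{2,0}\oplus K^{1,1}\oplus K^{0,2}$ with $K^{0,2}=\overline{K^{2,0}}$. Here $K^{2,0}$ is the kernel of $\Lambda^2 H^0(X,\Omega^1_X)\to H^0(X,\Omega^2_X)$ and $K^{1,1}$ that of the mixed product $H^0(X,\Omega^1_X)\otimes\overline{H^0(X,\Omega^1_X)}\to H^{1,1}(X)$. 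One direction of the equivalence needs no hypothesis: a pencil $f\colon X\to C$ of genus $g\geq 2$ makes $f^*H^0(C,\Omega^1_C)$ an isotropic subspace, so $\Lambda^2 f^*H^0(C,\Omega^1_C)$ is a nonzero subspace of $K^{2,0}$ and $\varphi_X$ is not injective.

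For the converse I would show that when $p_g(X)=1$ a nonzero kernel must contain a nonzero decomposable class, equivalently an isotropic $2$-plane $\langle u,v\rangle\subset H^1(X,\rc)$; by the isotropic-subspace theorem relating such planes to genus-$\geq 2$ pencils (the result of Catanese recalled in the remark preceding this corollary, which rests on Castelnuovo--De Franchis), this yields the required pencil. The natural tool is a top-degree wedge: for a nonzero $\xi\in K^{2,0}$ of half-rank $r$, the ring-homomorphism property of pullback gives $alb_X^*(\xi^{\wedge r})=(alb_X^*\xi)^{\wedge r}=0$, so a product $\omega_1\wedge\cdots\wedge\omega_{2r}$ of linearly independent holomorphic one-forms vanishes in $H^0(X,\Omega^{2r}_X)$, and the uniqueness of the holomorphic $n$-form forced by $p_g(X)=1$ is what should prevent this relation from being absorbed by a fibration over a higher-dimensional base, pinning it instead to an isotropic pair. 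I expect this to be the main obstacle: a vanishing wedge of several one-forms only produces, a priori, a fibration onto a base of dimension possibly larger than one, and it is precisely the single holomorphic $n$-form that must be leveraged to descend to a curve; the contribution of the mixed part $K^{1,1}$ would be handled by the same circle of ideas, tracing a class in $K^{1,1}$ back to holomorphic one-forms whose Hodge-theoretic product degenerates and hence to the same pencil. With this in hand, injectivity of $\varphi_X$ is equivalent to the absence of genus-$\geq 2$ pencils, and the corollary follows from Theorem~\ref{mainthm} together with the invariance of $p_g$ established above.
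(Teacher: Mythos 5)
Your outer skeleton is exactly the paper's: use the derived invariance of $p_g=h^{0,n}$ (Remark \ref{hodge}) to get $p_g(Y)=1$, reduce injectivity of the cup product to the (non)existence of irrational pencils of genus $\geq 2$, and conclude via the bijections $\mu_g$ of Theorem \ref{mainthm}. The easy direction of that reduction (a pencil $f\colon X\to C$ of genus $g\geq 2$ gives the isotropic subspace $f^*H^0(C,\Omega^1_C)$, hence a nonzero decomposable class in $\ker\varphi_X$) is fine. But the converse --- that when $p_g(X)=1$ non-injectivity of $\varphi_X$ forces an irrational pencil --- is a genuine gap in your proposal, and you acknowledge it yourself. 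From a nonzero $\xi\in K^{2,0}$ of rank $2r$, the vanishing of $\xi^{\wedge r}$ in $H^0(X,\Omega^{2r}_X)$ only gives, via the generalized Castelnuovo--De Franchis/Catanese theory, a fibration over a base of dimension possibly $>1$; extracting from this an isotropic $2$-plane (equivalently a pencil over a curve) is precisely where the hypothesis $p_g(X)=1$ must enter, and your text offers a hope (``is what should prevent \ldots pinning it instead to an isotropic pair'') rather than an argument. The mixed part $K^{1,1}$ is worse: a kernel class there involves products $\omega\wedge\bar\eta$ of a holomorphic with an anti-holomorphic form, to which Castelnuovo--De Franchis-type statements do not apply at all, and ``the same circle of ideas'' is not a proof.

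The paper does not prove this equivalence; it cites it. By \cite[Theorem 1.3]{chen+jiang+tian:irregular} (Chen--Jiang--Tian), for a smooth projective variety with $p_g=1$ the map $\varphi_X$ is injective if and only if $X$ carries no irrational pencil of genus two. That is a substantive theorem whose proof is not a formal Hodge-theoretic manipulation, and your proposal in effect attempts to reprove it from scratch, stopping exactly at its main difficulty. With that citation in hand the corollary is two lines, as in the paper: $p_g(Y)=p_g(X)=1$, and $F_X^2=\emptyset$ if and only if $F_Y^2=\emptyset$ by Theorem \ref{mainthm}. As written, your argument does not establish the statement.
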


\begin{proof}
By  \cite[Theorem 1.3]{chen+jiang+tian:irregular}, the map $\varphi_X$ is injective if and only if $X$ does not carry  a irrational pencil of genus two. As $p_g(X) = p_g(Y)$, the corollary follows from Theorem \ref{mainthm}.
\end{proof}
%
%

%

\section{Further results}\label{secfurther} 
In this section we describe the behavior of $\chi$-positive $3$-higher irrational pencils of $D$-equivalent fourfolds. 
 We begin   with a general result. 
 
 We denote by $X$ and $Y$ two smooth projective varieties of dimension $n\geq 2$. Suppose    $\Phi_{\sE} \colon \rd(X) \stackrel{\sim}{\longrightarrow} \rd(Y)$ is an equivalence of triangulated categories and let   $F_{\sE}$ be the induced Rouquier isomorphism. 
 Moreover let  $f\colon X \to V$ be a $\chi$-positive $k$-higher irrational pencil with $0<k<n$ and irregularity $q=q(V) \geq k$. If $(\pi \colon \widetilde X \to X,  \; \widetilde f \colon \widetilde X \to \widetilde V)$ is a non-singular representative of $f$, then $Z\stackrel{{\rm def}}{=} \pi_* \widetilde f^* \Pic^0(\widetilde V)$ is an irreducible component of $V^{n-k}(\omega_X)_0$ of dimension $q$.
Denote by $h\colon Y \to W$ the fibration induced by the abelian variety $Z' \stackrel{{\rm def}}{=} F_{\sE}({\rm id}_X,Z) \subset V^{n-k}(\omega_Y)_0$ as in Lemma \ref{5.1}. 

%
 
\begin{prop}\label{propbasedim}
If  $\dim W<\dim V$, then $Y$ admits a fibration $p'\colon Y\to U$ in $F_Y^{k',q'}$ for some integers $0<k'< \dim V$ and $2\leq q'\leq q(V)$. Moreover,  there exists a dominant rational map $\gamma \colon W \dashrightarrow U$ such that $p'=\gamma \circ h$.  
\end{prop}

\begin{proof}
The proposition follows by Proposition \ref{propdimless} as $q(W)= \dim Z' =q(V) \geq k$ and $\dim W < \dim V = k$.

\end{proof}

 We can say something more about $W$ when $n=4$ and $k=3$. 
We denote by $(\pi'\colon \widetilde Y \to Y,  \; \widetilde h \colon \widetilde Y \to \widetilde W)$ a non-singular representative of $h$.

\begin{prop}\label{cor4}
If $\dim W<3$, then $\widetilde W$ is either  birational to an isotrivial elliptic surface fibered over a curve of genus $q(\widetilde W)-1$, or  a smooth projective curve. 
\end{prop}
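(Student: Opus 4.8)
The plan is to carry out the case analysis $\dim W\in\{1,2\}$ and to reduce the two-dimensional case to a classification of irregular surfaces. First I would record the numerical data transported by the construction: since $\widetilde h^{*}$ is injective on $\Pic^0$ and $Z'=F(\mathrm{id}_X,Z)$ has the same dimension as $Z$, one gets $q(\widetilde W)=\dim Z'=\dim Z=q(\widetilde V)=q\ge 3$, and $\widetilde W$ is of maximal Albanese dimension by Lemma \ref{5.1}. If $\dim W=1$, then $W=\widetilde W$ is a smooth projective curve (necessarily of genus $q$), and we are in the second alternative. Thus the substance of the proof is the case $\dim W=2$, in which $\widetilde W$ is a smooth projective surface with $q(\widetilde W)=q\ge 3>2=\dim\widetilde W$.

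The first thing I would establish is that $\chi(\omega_{\widetilde W})=0$, arguing by contradiction. Suppose $\chi(\omega_{\widetilde W})>0$; then generic vanishing on the maximal-Albanese-dimension surface $\widetilde W$ gives $V^0(\omega_{\widetilde W})=\Pic^0(\widetilde W)$, so that $h\colon Y\to W$ is a $\chi$-positive $2$-irrational pencil. Since the fibres of $\widetilde h$ are $2$-dimensional, $R^{n-2}\widetilde h_{*}\omega_{\widetilde Y}\simeq\omega_{\widetilde W}$, and the argument establishing the reverse direction of Theorem \ref{fibrAlbprim} (the implication ``pencil $\Rightarrow$ component'', which requires no auxiliary hypothesis) shows that $Z'=\widetilde h^{*}\Pic^0(\widetilde W)$ is an irreducible component of $V^{n-2}(\omega_Y)_0=V^2(\omega_Y)_0$. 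Because $n=4$, Conjecture $(H^2_4)$ holds by Abuaf \cite{abuaf:units}, so Theorem \ref{lopo} gives $F(\mathrm{id}_X,V^2(\omega_X)_0)=(\mathrm{id}_Y,V^2(\omega_Y)_0)$, whence $Z$ is an irreducible component of $V^2(\omega_X)_0$ as well. But by Lemma \ref{5.1} and Remark \ref{rmkalb} the fibration attached to a component is the Stein factorization of $\pi\circ alb_X$, which is intrinsic to $Z$ and independent of the cohomological degree; reading $Z$ inside $V^2(\omega_X)_0$ then bounds the dimension of its base by $n-2=2$, contradicting $\dim V=3$. Hence $\chi(\omega_{\widetilde W})=\chi(\sO_{\widetilde W})=0$.

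The second step is the classification. At this point $\widetilde W$ is a smooth projective surface of maximal Albanese dimension with $\chi(\sO_{\widetilde W})=0$ and $q(\widetilde W)=q\ge 3$. I would invoke the structure of such surfaces: by Ein--Lazarsfeld \cite{ein+laz:theta}, the equality $\chi(\sO_{\widetilde W})=0$ forces the Albanese image of $\widetilde W$ to be ruled by translates of a positive-dimensional abelian subvariety $K\subset\Alb(\widetilde W)$, and since $q\ge 3$ this $K$ must be an elliptic curve. Passing to the quotient $\Alb(\widetilde W)\to\Alb(\widetilde W)/K$ then produces a fibration of $\widetilde W$ onto a curve $B$ with $\Alb(B)=\Alb(\widetilde W)/K$, so that $g(B)=q-1$; combining this with the Enriques--Kodaira classification identifies $\widetilde W$, up to birational equivalence, with an isotrivial elliptic surface over $B$, the general fibre being the elliptic curve $K$. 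This yields the first alternative and finishes the proof.

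I expect the main obstacle to be this last classification step. Showing that $\widetilde W$ is not of general type is precisely the content of the $\chi=0$ computation above, but extracting the exact isotrivial elliptic structure over a curve of genus $q-1$ rests on the finer geometry of irregular surfaces --- the Ein--Lazarsfeld subtorus theorem together with the surface classification --- and the bookkeeping that pins down isotriviality and the value $q-1$ of the genus is the delicate part. By contrast, the derived-categorical ingredients (invariance of $V^2(\omega_{\bullet})_0$ in dimension four, and the degree-independence of the fibration attached to a component via Lemma \ref{5.1} and Remark \ref{rmkalb}) are comparatively routine.
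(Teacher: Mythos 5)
Your proposal is correct and takes essentially the same route as the paper: the identical contradiction argument (if $\chi(\omega_{\widetilde W})>0$ then $Z'=\widetilde h^*\Pic^0(\widetilde W)\subset V^2(\omega_Y)_0$, hence by Abuaf's theorem and Theorem \ref{lopo} the component $Z$ sits inside $V^2(\omega_X)_0$, which is impossible since the fibration attached to $Z$ is intrinsic and has the threefold $V$ as base), followed by the classification of surfaces of maximal Albanese dimension with $q\geq 3$ and $\chi=0$. The only difference is cosmetic: where the paper simply cites classification theory for that last step, you sketch its proof via the Ein--Lazarsfeld subtorus theorem and Enriques--Kodaira, which fills in, rather than replaces, the paper's citation.
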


\begin{proof}
Suppose   $\dim W=2$. We will prove  that   $\chi(\omega_{\widetilde W})=0$, so that  $\widetilde W$ is    birational to an isotrivial elliptic surface  fibered over a curve of genus $q(\widetilde W)-1$. This fact follows by  the classification theory of complex algebraic surfaces as $\widetilde W$ is of  maximal Albanese dimension and $q(\widetilde W) = q(W) \geq 3$ (\emph{cf}. \cite[Chapter V Section 12]{BHPVdV} and \cite[Theorem X.4]{B}).
If, by contradiction, $\chi(\omega_{\widetilde W})>0$, then by Corollary \ref{corfibration} we   have   $Z' = \pi'_* \widetilde h^*\Pic^0(\widetilde{W})$ is an irreducible component of $V^2(\omega_Y)_0$.  
 In turn, by Theorem \ref{lopo},  the component   $Z = F_{\sE}^{-1} ({\rm id}_Y,Z')$ would be an irreducible component of $V^2(\omega_X)_0$ too, which is impossible as  $Z$ induces a fibration onto the threefold $V$.   The  case $\dim W=1$ is obvious.
\end{proof}

\begin{theorem}\label{fourfolds}
Let $X$ and $Y$ be smooth projective fourfolds, and let  $\Phi_{\sE} \colon \rd(X) \to \rd(Y)$  be an equivalence of triangulated categories. If  $F_X^g=\emptyset$ for all $g \geq 2$, then for any integer $q\geq 3$ the equivalence $\Phi_{\sE}$ induces a bijection of sets $\eta_q\colon F_X^{3,q} \to F_Y^{3,q}$ preserving the bases of the fibrations.
\end{theorem}

\begin{proof}
We define $\eta_q  =  u_{Y,3,q}^{-1} \, \circ \, F_{\sE} \, \circ \, u_{X,3,q}$.
Let $(f\colon X \to V) \in F_X^{3,q}$ and suppose   $\eta_q(f)=(h\colon Y \to W)$. To begin with,  we will prove   $\dim W=3$.
If $X$ does not carry any irrational pencil of genus $g\geq 2$, then so does $Y$ by Theorem \ref{mainthm}. This forces $\dim W>1$. Suppose now $\dim W=2$. By Proposition \ref{cor4}, the variety $\widetilde W$ admits an elliptic fibration onto a smooth curve $B$ of genus $g\geq 2$. By Proposition \ref{extension}, there exists a rational map $Y\dashrightarrow B$ which extends to a morphism. Hence $Y$   admits an irrational pencil of genus $g\geq 2$ which is impossible.
We conclude   $\dim W=3$ and    $\eta_q$  defines a bijection. The proof that $\eta_q$ preserves the bases of the fibrations up to isomorphism  follows by Proposition \ref{deriso}.
\end{proof}

\section{Higher irrational pencils and $K$-equivalence}\label{secK}
In this section we   study the behavior of the sets of fibrations $F_X^{k,q}$ under $K$- and birational equivalence.

Let $X$ and $Y$ be smooth projective varieties.
To begin with, we define a $k$-\emph{higher irrational pencil} $f\colon X\to V$ as a $\chi$-positive $k$-higher irrational pencil without the condition on the positivity of $\chi(\omega_{\widetilde V})$. 
 For every pair of integers $q\geq k\geq 1$, we denote by $P_X^{k,q}$ the set of isomorphism classes of  $k$-higher irrational pencils $f\colon X\to V$  such that $\dim V=k$ and $q(V)=q$. Note   $P_X^{k,q}\supset F_X^{k,q}$. 
The following proposition holds  if $X$ and $Y$ are   $K$-equivalent.

\begin{prop}\label{propp}
 If $X$ and $Y$ are birational, then for every  pair of integers $q\geq k\geq 1$ 
there exists a bijection of sets 
$\tau_{k,q} \colon  P_X^{k,q} \to P_Y^{k,q} $ preserving the bases of the fibrations up to isomorphism.
  \end{prop}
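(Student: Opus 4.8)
The plan is to prove birational invariance of $P_X^{k,q}$ by showing that every $k$-irrational pencil on $X$ transfers to a $k$-irrational pencil on $Y$ via a common resolution, using the fact that the base varieties carry finite maps to abelian varieties. Let $Z$ be a smooth projective variety with birational morphisms $p\colon Z\to X$ and $\sigma\colon Z\to Y$. Given a $k$-irrational pencil $\big(f\colon X\to V,\ \widetilde V\to V,\ b\colon V\to \Alb(\widetilde V)\big)$ in $P_X^{k,q}$, I would first pass to the composition $f\circ p\colon Z\to V$ and then to the induced rational map $Y\dashrightarrow V$. The essential point is that this rational map extends to an honest morphism $g\colon Y\to V$: since $b\colon V\to\Alb(\widetilde V)$ is a finite morphism to an abelian variety, any rational map from a smooth projective variety to $V$ composed with $b$ is a rational map to an abelian variety and hence a morphism (abelian varieties contain no rational curves, so rational maps from smooth varieties extend everywhere); because $b$ is finite, the map to $V$ extends as well. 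This is exactly the mechanism already invoked in Remark \ref{rmkbirational} and in the proof of Theorem \ref{fourfolds}.

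Next I would verify that the resulting morphism $g\colon Y\to V$ is itself a $k$-irrational pencil in $P_Y^{k,q}$. Here I would take the Stein factorization of $g$ if necessary to guarantee connected fibers, but in fact since $f$ has connected fibers and $p,\sigma$ are birational, the generic fiber of $g$ is birational to the generic fiber of $f$ and hence connected, so $g$ is already a fibration onto $V$. The base $V$, the resolution $\widetilde V\to V$, and the finite map $b\colon V\to\Alb(\widetilde V)$ are unchanged, so conditions $(i)$ and $(ii)$ in the definition of a $k$-irrational pencil — maximal Albanese dimension of $\widetilde V$ and the factorization of $alb_{\widetilde V}$ — hold automatically, and $q(V)=q(\widetilde V)=q$ is preserved. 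Since the $\chi$-positivity condition is dropped in the definition of $P_X^{k,q}$, there is nothing further to check on the base. This defines the assignment $\tau_{k,q}\colon P_X^{k,q}\to P_Y^{k,q}$ sending the class of $f$ to the class of $g$.

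To show $\tau_{k,q}$ is a bijection, I would run the symmetric construction with the roles of $X$ and $Y$ exchanged to obtain a map $\tau_{k,q}'\colon P_Y^{k,q}\to P_X^{k,q}$, and then check that the two compositions are the identity. This reduces to the observation that the fibrations $f\colon X\to V$ and $g\colon Y\to V$ become equal after pullback to the common resolution $Z$ (both equal $f\circ p=g\circ\sigma$ as rational maps), so round-tripping returns the same base $V$ and the same finite map $b$ to $\Alb(\widetilde V)$; two $k$-irrational pencils with a common birational model over the same base and the same map to the fixed abelian variety are isomorphic in the sense of \S\ref{section:fibrations}. By construction $\tau_{k,q}$ preserves the base $V$, hence preserves the bases of the fibrations up to isomorphism.

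The main obstacle is the rigorous verification that the rational map $Y\dashrightarrow V$ extends to a morphism and that the extension respects the isomorphism relation on pencils; this is where the finiteness of $b\colon V\to\Alb(\widetilde V)$ is indispensable, precisely as noted in Remark \ref{rmkbirational}. Without a map to an abelian variety one could not rule out indeterminacy loci, and the birational map between the bases might fail to extend. Everything else — the preservation of $q(V)$, of maximal Albanese dimension, and of the factorization through $alb_{\widetilde V}$ — is formal once the base $V$ and its auxiliary data are carried along unchanged, since these depend only on the fixed data $(\widetilde V\to V,\ b)$ and not on whether the fibration originates from $X$ or $Y$.
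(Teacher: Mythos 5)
Your proposal is correct and takes essentially the same route as the paper's proof: compose the pencil with the birational map to get a rational map $Y\dashrightarrow V$, extend it to a morphism using the finiteness of $b\colon V\to\Alb(\widetilde V)$ together with the fact that rational maps from smooth varieties to abelian varieties are morphisms, and let this assignment define $\tau_{k,q}$. The paper's proof is just a terser version of yours (it skips the common resolution and leaves the connected-fibers and bijectivity checks implicit), so the extra verifications you supply are consistent elaborations rather than a different argument.
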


\begin{proof}
Let $\varphi \colon Y \dashrightarrow X$ be a birational map and let $f\colon X\to V$ be a $k$-higher irrational pencil in $P_X^{k,q}$. Moreover consider the composition $h= \big(f \circ \varphi \big)  \colon Y \dashrightarrow V$. 
By Proposition \ref{extension}  $h$ extends to a morphism. The assignment $f\mapsto h$ defines the wanted bijection $\tau_{k,q}$.
\end{proof}
%
%
%

 \subsection*{Acknowledgements}
 I am especially grateful to Christian Schnell for sharing his insight that led to  the proof of Theorem \ref{mainthm}. 
 I am   grateful to an anonymous referee for carefully reading the paper and suggesting several improvements.
    I thank Federico Caucci, Fran\c{c}ois Greer, Georg Hein, Daniel Huybrechts, Rob Lazarsfeld, Anatoly Libgober, Emanuele  Macr\`{i},  Giuseppe Pareschi,
    Mihnea Popa, Elisa Postinghel, Botong Wang and Letao Zhang for fruitful conversations. I acknowledge the support of the Simons Foundation,  grant $261756$ of the Research Councils of Norway,  
  SIR 2014 AnHyC ``Analytic aspects in complex and hypercomplex geometry" (code RBSI14DYEB), and the Rita Levi-Montalcini program.

\bibliographystyle{amsalpha}
\bibliography{bibl}

\end{document}